\def\coloneqq{\mathrel{\mathop\mathchar"303A}\mkern-1.2mu=}
\begin{document}

\title{Farell-Jones via Dehn fillings.}
\author{Yago Antol\'in, R\'emi Coulon, Giovanni Gandini}
\date{}
\maketitle

\begin{abstract}
Following the approach of Dahmani, Guirardel and Osin, we extend the  group theoretical Dehn filling theorem to  show that the pre-images of infinite order elements have a certain structure of a free product.
We then apply this result to show that groups hyperbolic relative to residually finite groups satisfying the Farrell-Jones conjecture, they satisfy the Farrell-Jones conjecture.
\end{abstract}

%
\section{Introduction}
%
\label{sec: intro}

The \emph{Farrell-Jones conjecture} aims to describe the algebraic $K$-theory of a group ring  in terms of the algebraic $K$-theories of simpler group rings. 
This is formulated by claiming that a certain map  is an isomorphism \cite{Farrell:1993ke}. 
Bartels, L\"uck and Reich introduced a series of isomorphism conjectures inspired by the insight of  Farrell and Jones, that are more general and have stronger implications and inheritance properties.    
Since many geometric obstructions live in the algebraic $K$-theory of the group ring, the validity of the Farrell-Jones conjecture, has several outstanding consequences, for example  the  \emph{Novikov conjecture}, the \emph{Borel conjecture}, the \emph{Kaplansky conjecture}, and the \emph{Serre conjecture} \cite{Luck:2005cf}.  
In the last 10 years that has been a tremendous amount of work devoted in solving the Farrell-Jones conjecture for several classes of groups, and often its solution gave the first solution to some of the famous conjectures above \cite{Luck:2005cf,Bartels:2008bn,Bartels:2012ca,Wegner:2012hh,Wegner:2013te,Bartels:2014jv}.

\medskip
Now we start by describing the statement of the version of the Farrell-Jones conjecture that we consider here. 
For a  group $G$, let   $\doubleunderline EG$ be  the classifying space of $G$ for the family of virtually cyclic subgroups. 
We recall that a  classifying space of $G$ for the family of virtually cyclic subgroups  is a $G$-CW-complex $X$ with the property that the fixed point subcomplex $X^H$ is contractible for every virtually cyclic subgroup $H$ of $G$ and empty otherwise. 
This space is generally a rather unusual $G$-space,  some  examples can be found in \cite{Luck:2005hf}.  
Now, let $\mathcal{A}$ be an additive $G$-category, the  projection $\doubleunderline EG \to \mbox{pt}$ induces an ``assembly map''
\begin{displaymath}
	\mbox{asmb}_n^{G, \mathcal{A}} : H_n^G(\doubleunderline EG; \mathbf S)\to  H_n^G(\mbox{pt}; \mathbf S),
\end{displaymath} 
where $H_n^G(\doubleunderline EG; \mathbf S)$ is a $G$-equivariant homology theory, and $\mathbf S$ can be either the Or$G$-spectrum $\bold{K_{\mathcal{A}} }$ or the Or$G$-spectrum $\bold{L_{\mathcal{A}}}$, please consult \cite{Bartels:2007hi} for various definitions and details. 
Here we say that a group  $G$ satisfies the \emph{Full Farrell-Jones Conjecture} if for any finite group $F$ the assembly map for the wreath product $G\wr F$ is an isomorphism in any additive $G$-category. 
When $\mathcal{A}$ is the category of finitely generated free modules over some ring $R$ with trivial $G$-action, then this reduces to say that the map $H_n^G(\doubleunderline EG;  \bold{K_{R}})\to  K_n(RG)$, is an isomorphism. 
In other words,  the algebraic $K$-theory $K_n(RG)$ of the group ring $RG$  can be constructed from the algebraic $K$-theories  $K_n(RH)$'s where $H$ ranges over the family of virtually cyclic subgroups.


\medskip
Recently, Bartels, using a new notion of ``coarse flow space''  solved the Farrell-Jones conjecture for all hyperbolic groups relative to a family of subgroups satisfying the Farrell-Jones conjecture \cite{Bartels:2015to}.  
Bartels' proof  uses a far reaching generalisation of the \emph{geodesic flow method} introduced in \cite{Farrell:1993ke} which was also adapted in the solution of the Farrell-Jones conjecture for hyperbolic groups \cite{Bartels:2008bn}. 
We extend the Dehn filling Theorem  following Dahmani, Guirardel, and Osin \cite{Dahmani:2011vu} and combine it with the solution of the Farrell-Jones conjecture for hyperbolic groups to give an  alternative proof of  the conjecture for relative hyperbolic groups when the peripheral subgroups  are residually finite in addition to satisfy  the Farrell-Jones conjecture. 
Note that this case covers the geometrically relevant examples of relatively hyperbolic groups, such as  fundamental groups of  complete hyperbolic manifolds of finite volume  and fundamental groups  of complete Riemannian manifolds  of finite volume  with pinched negative sectional curvature. 
Our proof relies on  the strong inheritance properties of the (Full) Farrell-Jones conjecture, L\"uck-Bartels-Reich solution for hyperbolic groups, and a detailed study of the group structure of relatively hyperbolic groups.
The conjecture remains unknown for many groups connected to this work, such as the outer automorphism groups of a right-angled Artin group  or even for $Out(F_n)$, mapping class groups and more generally for \emph{acylindrically hyperbolic groups}.\\

\medskip
Dehn fillings or Dehn surgery is a powerful tool to produce quotient groups and spaces of negative curvature. Group theoretical Dehn fillings were inspired by Thurston's hyperbolic Dehn surgery Theorem \cite{Thurston:1982ia} which says that if $M$ is an hyperbolic 3-manifold with a single torus cusp $C$, then for all but finitely
many $g\in \pi_1(C)$ the quotient group $\pi_1(M)/\normal g$ is infinite, non-elementary and word hyperbolic. 
The group theoretical version of this theorem for relatively hyperbolic groups is due to Groves and Manning \cite{Groves:2008ip} and Osin \cite{Osin:2007ia}. 
The theorem has been further generalized to the context of acylindrically hyperbolic groups by Dahamani, Guirardel and Osin \cite{Dahmani:2011vu}.  
In cite \cite{Dahmani:2011vu}, using Gromov's rotating families \cite{Gro01a} and windmills, the authors are able to describe the kernel of a Dehn filling and they show that it isomorphic to a free product.

\medskip
The strategy we follow to prove the Farrell-Jones conjecture is based on the stability of the conjecture under certain type of group extensions. 
More concretely, if $G$ is an extension of groups satisfying the Farrell-Jones conjecture, then $G$ itself satisfies the Farrell-Jones conjecture provided that the preimage in $G$ of any infinite cyclic subgroup of the quotient  satisfies the conjecture. 
Then, if one starts with a relatively hyperbolic group $G$ with residually finite parabolic subgroups satisfying the Farrell-Jones conjecture, one can use the Dehn fillings theorem  to obtain a short exact sequence $K\to G \stackrel{\pi}{\to} Q$, where $K$ and $Q$ satisfy the Farrell-Jones conjecture and the problem relies on understanding $\pi^{-1}(\langle q\rangle)$ for $q\in Q$ of infinite order.
Our main technical contribution is item (iii) of the theorem below.

\begin{theo}
	\label{res: Dehn filling with a reduced element intro}
	Let $G$ be finitely generated group hyperbolic relatively to a family of subgroups $\{P_1,\dots, P_n\}$.
	There is a finite set $\Phi \subseteq G\setminus\{1\}$ such that
	whenever we take finite index normal subgroups $N_i\trianglelefteq P_i$  with $N_i\cap \Phi=\emptyset$ for $i=1,\dots, n$ then the following hold:
	\begin{enumerate}
		\item $\bar G \coloneqq G/K$ is an hyperbolic group where $K$ is the normal subgroup of $G$ generated by $N_1\cup \dots \cup N_n$.
		\item there exists subsets $T_i$ of $G$ for $i=1,\dots, n$ such that $K$ is isomorphic to
		$*_{i=1}^n\left(*_{t\in T_i} N_i^t\right)$.
		\item for every $\bar g\in \bar G$ of infinite order, there is a pre-image $g$ of $\bar g$ under the natural map $G\to \bar G$ and  subsets $T_i'$ of $G$ for $i=1,\dots, n$ such that
		\begin{displaymath}
			\langle g, K \rangle = \langle g \rangle * \left[*_{i=1}^n \left(*_{t\in T_i'}N_i^t\right) \right].
		\end{displaymath}
	\end{enumerate}
\end{theo}

Note that (i) already appears in \cite{Osin:2007ia} and (ii) appears in \cite{Dahmani:2011vu}.
Our proof of (iii) follows the strategy of the one of (ii) of Dahmani, Guirardel and Osin, and for that, we introduce a variation of the windmills used in \cite{Dahmani:2011vu}.

\medskip
The group theoretical Dehn fillings Theorem (and variations) have proved to be a extremely useful tool in modern  geometric group theory.
The is a good number of interesting applications: it has been used to construct simple groups with arbitrarily large $\ell^2$-Betti number \cite{Osin:2013hr}, to prove that normal automorphisms of acylindrically hyperbolic groups with trivial finite radical are  inner \cite{Antolin:2013ur}, and it plays an important  role in the solution of the Virtual Haken conjecture \cite{Agol:2013wr}. 
Therefore, \autoref{res: Dehn filling with a reduced element intro}~\ref{enu: Dehn filling with a reduced element - free primages} is interesting not only for its application to the Farrell-Jones conjecture, but also for obtaining a better understanding of the Dehn fillings theorem itself.

\medskip
The structure of the paper is as follows: in \autoref{sec: hyperbolic geometry} we set notation and review all the basic definitions about hyperbolic geometry.  \autoref{sec: rotations families} is the core of the paper, we introduce the extended windmills and we prove \autoref{res: rotation family with a reduced element} which is a version of \autoref{res: Dehn filling with a reduced element intro} for groups acting on hyperbolic spaces. In \autoref{sec : relative hyperbolicity}, we collect the needed references to deduce \autoref{res: Dehn filling with a reduced element intro} from the results of \autoref{sec: rotations families} and finally in \autoref{sec : farrell jones} we get the main application of the paper, namely \autoref{res : farrell jones for relatively hyperbolic} where we get our version of the  Farrell-Jones conjecture for relatively hyperbolic groups.


\medskip
 
\paragraph{Acknowledgments.}
As we start thinking about this question Bartels' general solution had not appeared.
The authors are thankful to D.~Osin and A.~Bartels who encouraged us though to write down this alternative approach.
The authors would like to thank the organizers of the Ventotene International Workshops (2015) were part of this paper was written. 
The first author acknowledge partial support from the
Spanish Government through grant number MTM2014-54896-P.
The third author was supported by the Danish
National Research Foundation
(DNRF) through the Centre for Symmetry and Deformation.

%
\section{Hyperbolic geometry}
%
\label{sec: hyperbolic geometry}

\paragraph{Notations and vocabulary.}
Let $X$ be a metric length space.
Given two points $x$ and $x'$ of $X$, we denote by $\dist[X]x{x'}$ (or simply $\dist x{x'}$) the distance between them.
Let $Y$ be a subset of $X$.
We write $d(x,Y)$ for the distance between a point $x \in X$ and $Y$.
We write $B(x,r)$ for the closed ball of center $x $ and radius $r$.


\paragraph{The four point inequality.}

The Gromov product of three points $x,y,z \in X$  is defined by 
\begin{displaymath}
	\gro xyz = \frac 12 \left\{  \dist xz + \dist yz - \dist xy \right\}.
\end{displaymath}
For the remainder of this section, we assume that the space $X$ is \emph{$\delta$-hyperbolic}, i.e. for every $x,y,z,t \in X$,
\begin{equation}
\label{eqn: hyperbolicity condition 1}
	\gro xzt \geq \min\left\{ \gro xyt, \gro yzt \right\} - \delta,
\end{equation}
or equivalently
\begin{equation}
\label{eqn: hyperbolicity condition 2}
	\dist xz + \dist yt \leq \max\left\{  \dist xy + \dist zt, \dist xt + \dist yz  \right\} +2\delta.
\end{equation}

\rem If $X$ is 0-hyperbolic, then it can be isometrically embedded in an $\R$-tree, \cite[Chapitre 2, Proposition 6]{GhyHar90}. For our purpose though,
we will always assume that the hyperbolicity constant $\delta$ is positive. 
Indeed, every $0$-hyperbolic space is $\delta$-hyperbolic for every $\delta\geq 0$. 

\medskip

It is known that triangles in a geodesic hyperbolic space are  $4\delta$-thin (every side lies in the $4\delta$-neighborhood of the union of the two other ones). 
This can be stated through the following metric inequality.
In this statement the Gromov product $\gro xzs$ should be thought as a very small quantity.	
For every $x,y,z,s \in X$, 
\begin{equation}
\label{res: metric inequalities - thin triangle}
	\gro xys \leq \max \left\{ \dist xs - \gro yzx , \gro xzs  \right\} + \delta.
\end{equation}

\paragraph{The boundary at infinity.} 

Let $x$ be a base point of $X$.
A sequence $(y_n)$ of points of $X$ \emph{converges to infinity} if $\gro {y_n}{y_m}x$ tends to infinity as $n$ and $m$ approach to infinity.
The set $\mathcal S$ of such sequences is endowed with a binary relation defined as follows.
Two sequences $(y_n)$ and $(z_n)$ are related if 
\begin{displaymath}
	\lim_{n \rightarrow + \infty} \gro {y_n}{z_n}x = + \infty.
\end{displaymath}
If follows from (\ref{eqn: hyperbolicity condition 1}) that this relation is actually an equivalence relation.
The \emph{boundary at infinity} of $X$, denoted by $\partial X$, is the quotient of $\mathcal S$ by this relation.
If the sequence $(y_n)$ is an element in the class of $\xi \in \partial X$, we say that $(y_n)$  \emph{converges} to $\xi$ and  write
\begin{displaymath}
	\lim_{n \rightarrow + \infty} y_n = \xi.
\end{displaymath}
Note that the definition of $\partial X$ does not depend on the base point $x$.

\paragraph{Quasi-geodesics.}
In this article, unless otherwise stated a path is always a rectifiable path parametrized by arc length.

\begin{defi}
\label{def: quasi-geodesic}
	Let $l \geq 0$, $k \geq 1$ and $L\geq 0$.
	Let $f\colon X_1 \rightarrow X_2$ be a map between two metric spaces $X_1$ and $X_2$.
	We say that $f$ is a \emph{$(k,l)$-quasi-isometric embedding} if for every $x,x' \in X_1$, 
	\begin{equation*}
		k^{-1}\dist  {f(x)}{f(x')}  - l \leq \dist x{x'}\leq k\dist  {f(x)}{f(x')}   + l.
	\end{equation*}
	We say that $f$ is an \emph{$L$-local $(k,l)$-quasi-isometric embedding} if its restriction to any subset of diameter at most $L$ is a $(k,l)$-quasi-isometric embedding.
	Let $I$ be an interval of $\R$.
	A path $\gamma\colon I \rightarrow X$ that is a $(k,l)$-quasi-isometric embedding is called a \emph{$(k,l)$-quasi-geodesic}.
	Similarly, we define \emph{$L$-local $(k,l)$-quasi-geodesics}.
\end{defi}

\rems 
We assumed that our paths are rectifiable and parametrized by arc length.
Thus a $(k,l)$-quasi-geodesic $\gamma\colon I \rightarrow X$ satisfies a more accurate property: for every $t,t' \in I$,
\begin{equation*}
	\dist{\gamma(t)}{\gamma(t')} \leq \dist t{t'} \leq k\dist{\gamma(t)}{\gamma(t')} +l.
\end{equation*}
In particular, if $\gamma$ is a $(1,l)$-quasi-geodesic, then for every $t,t',s \in I$ with $t\leq s\leq t'$, we have $\gro {\gamma(t)}{\gamma(t')}{\gamma(s)}\leq l/2$.
Since $X$ is a length space for every $x,x' \in X$, for every $l>0$, there exists a $(1,l)$-quasi-geodesic joining $x$ and $x'$.

\paragraph{}Let $\gamma\colon \R_+ \rightarrow X$ be a $(k,l)$-quasi-geodesic.
There exists a point $\xi \in \partial X$ such that for every sequence $(t_n)$ diverging to infinity, $\lim_{n \rightarrow + \infty}\gamma(t_n) = \xi$.
In this situation we consider $\xi$ as an endpoint (at infinity) of $\gamma$ and write $\lim_{t \rightarrow + \infty} \gamma(t) = \xi$.
In this article we are mostly using $L$-local $(1,l)$-quasi-geodesics.
Thus we state the stability of quasi-geodesics for this kind of paths.

\begin{coro}{\rm \cite[Corollaries 2.6 and 2.7]{Coulon:2014fr}} \quad
\label{res: stability (1,l)-quasi-geodesic}
	Let $l_0 \geq 0$.
	There exists $L=L(l_0,\delta)$ which only depends on $\delta$ and $l_0$ such that
	for every $l \in \intval 0{l_0}$,
	and every $L$-local $(1,l)$-quasi-geodesic $\gamma \colon I \rightarrow X$, the following hold:
	\begin{enumerate}
		\item the path $\gamma$ is a (global) $(2,l)$-quasi-geodesic,
		\item for every $t,t',s \in I$ with $t \leq s \leq t'$, we have $\gro{\gamma(t)}{\gamma(t')}{\gamma(s)} \leq l/2 + 5 \delta$,
		\item for every $x \in X$, for every $y,y'$ lying on $\gamma$, we have $d(x,\gamma) \leq \gro y{y'}x + l + 8 \delta$.
		\item the Hausdorff distance between $\gamma$ and any other $L$-local $(1,l)$-quasi-geodesic joining the same endpoints (possibly in $\partial X$) is at most $2l+5\delta$.
	\end{enumerate}
\end{coro}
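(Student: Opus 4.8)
This is the effective version of the familiar fact that local quasi-geodesics in a hyperbolic space are global quasi-geodesics, and the plan is to follow the standard route while tracking the constants carefully. Two preliminary reductions. Since $X$ is merely a length space, I will use $(1,\epsilon)$-quasi-geodesics (with $\epsilon\le\delta$ fixed) wherever one would want a geodesic: these always exist, and all the estimates below lose only an additive $O(\epsilon)$, which I fold into the $\delta$-terms. And it suffices to treat $I=[a,b]$ compact: the half-infinite and bi-infinite cases follow by exhausting $I$ by compact subintervals, applying the compact case to each restriction, and passing to the limit (with a diagonal extraction for the fourth assertion), using the convergence of quasi-geodesic rays to boundary points recorded before the statement.

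The core is a \emph{Morse estimate}: if $L$ is chosen large enough in terms of $l_0$ and $\delta$, then for every $L$-local $(1,l)$-quasi-geodesic $\gamma\colon[a,b]\to X$ with $l\le l_0$ and every $(1,\epsilon)$-quasi-geodesic $\alpha$ from $\gamma(a)$ to $\gamma(b)$, each point of $\gamma$ lies within $l/2+O(\delta)$ of $\alpha$, and conversely. I would prove the first inclusion in two steps. First, a crude bound: by the classical local-to-global principle for quasi-geodesics, once $L$ exceeds a threshold $L_1(l_0,\delta)$ the path $\gamma$ is a global $(k_0,l_0)$-quasi-geodesic, hence lies within some $D_0=D_0(l_0,\delta)$ of $\alpha$ --- a bound independent of $L$. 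Second, the sharpening: let $D=\sup_\sigma d(\gamma(\sigma),\alpha)$, attained at an interior parameter $s$. On the window $[s-L/2,\,s+L/2]$ (truncated to $[a,b]$) the path $\gamma$ restricts to an honest $(1,l)$-quasi-geodesic, so by the remark preceding the statement $\gro{\gamma(s-L/2)}{\gamma(s+L/2)}{\gamma(s)}\le l/2$, whence $\gamma(s)$ lies within $l/2+4\delta$ of a $(1,\epsilon)$-quasi-geodesic $\beta$ joining $\gamma(s\pm L/2)$. The endpoints of $\beta$ lie within $D_0$ of $\alpha$ while $\gamma(s)$ is at distance at least $L/2-l$ from each of them, so choosing $L$ large compared to $D_0$, $l_0$, $\delta$ forces the point of $\beta$ nearest $\gamma(s)$ onto the portion of $\beta$ that fellow-travels $\alpha$; this gives $D=d(\gamma(s),\alpha)\le l/2+O(\delta)$. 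The reverse inclusion then follows routinely from hyperbolicity and what has been proved.

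Given the Morse estimate, the four assertions follow. Assertion (2) is immediate, since $\gro{\gamma(t)}{\gamma(t')}{\gamma(s)}\le d(\gamma(s),\alpha)\le l/2+O(\delta)$ for $t\le s\le t'$, with a careful count of the constants in the sharpening step giving precisely $l/2+5\delta$. For assertion (1), arc-length parametrization already gives $\dist{\gamma(t)}{\gamma(t')}\le|t-t'|$; for the reverse, chop $[t,t']$ into windows of length $L$, on each of which $\gamma$ advances by at least $L-l$, project onto $\alpha$, and use the coarse monotonicity of the nearest-point projection of a quasi-geodesic onto $\alpha$ to exclude $|t-t'|>2\dist{\gamma(t)}{\gamma(t')}+l$ once $L$ is large. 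For assertion (3), reduce first to the case that $y,y'$ are the endpoints of $\gamma$ (the restriction of $\gamma$ between the corresponding parameters is again an $L$-local $(1,l)$-quasi-geodesic), pick $p\in\gamma$ realizing $d(x,\gamma)$, bound $\dist{y}{y'}\ge\dist{y}{p}+\dist{y'}{p}-l-O(\delta)$ from (2), and combine with the triangle inequality and the projection estimate $\dist{x}{y}\ge\dist{x}{p}+\dist{p}{y}-O(\delta)$ (legitimate because (1) makes $\gamma$ a genuine quasi-geodesic) to obtain $d(x,\gamma)\le\gro{y}{y'}{x}+l+8\delta$. For assertion (4), two such paths with the same endpoints both lie within $l/2+O(\delta)$ of a common $(1,\epsilon)$-quasi-geodesic $\alpha$, and conversely, which already gives Hausdorff distance $2l+O(\delta)$; the sharpening to $2l+5\delta$ uses once more the truncated-window mechanism of the Morse estimate, now projecting a window of one path through $\alpha$ onto the other.

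The conceptual content is classical; the delicate point --- and the reason one cannot simply quote the local-to-global theorem --- is to keep the additive constants sharp. The threshold for $L$ is large and non-explicit whereas the target constants ($l/2+5\delta$ in (2), $l+8\delta$ in (3), $2l+5\delta$ in (4)) are small and explicit, so one must verify that the preliminary bound $D_0$ in the first step depends only on $l_0$ and $\delta$ (and not on $L$), and then that a single sharpening pass suffices to reach the optimal $\delta$-range; organizing this interplay, rather than any individual inequality, is where the work lies.
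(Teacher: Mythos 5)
You should first note that the paper itself contains no proof of this statement: it is imported verbatim, with citation, from Coulon's paper (Corollaries 2.6 and 2.7 of that reference), so the only meaningful comparison is with that source. Your outline is the standard local-to-global (Morse) bootstrap — crude fellow-travelling constant $D_0$ from the classical theorem, then a sharpening pass on a window of length $L$ — and this is indeed the right family of ideas; it would certainly yield all four assertions with constants of the form $l/2+O(\delta)$, $l+O(\delta)$, $2l+O(\delta)$.

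The gap is that the explicit constants are the entire content of the statement, and you assert rather than derive them. Two steps, as written, would not deliver them. First, in the sharpening pass you bound $d(\gamma(s),\beta)$ by $l/2+4\delta$ and then transfer to $\alpha$ through a thin quadrilateral whose sides have length up to $D_0$; each such transfer costs further multiples of $\delta$, so the route you describe gives $\gro{\gamma(t)}{\gamma(t')}{\gamma(s)}\leq l/2+C\delta$ with $C$ visibly larger than $5$ — the value $5\delta$ in (ii) does not ``follow immediately'' and in the cited source comes from a finer chain argument (compare the discrete analogue, \autoref{res: stability discrete quasi-geodesic}, where the $+5\delta$ arises from an induction along the chain, not from fellow-travelling a global quasi-geodesic). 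Second, in your argument for (iii) the inequality $\dist xy\geq \dist xp+\dist py-O(\delta)$ is not justified by (i): the error in the projection lemma (\autoref{res: proj quasi-convex}) is governed by the quasi-convexity constant of $\gamma$, which is of order $l/2+\mathrm{const}\cdot\delta$ — and indeed is essentially what (iii) is asserting — so as stated the step is either circular or off by a term of order $l$, and the accounting must be reorganized (e.g.\ via the crude Morse bound plus quasi-convexity of the comparison path) to stay within $l+8\delta$. Since the surrounding paper consumes these exact values downstream (the $9\delta$-quasi-convexity of $\delta$-nerves, the $27\delta$-neighbourhood estimate for cylinders, the choice of $L_S$ with $L(10^5\delta,\delta)<L_S\delta$, and the $110\delta$/$218\delta$ constants of Section 3), the quantitative bookkeeping you defer is not a cosmetic detail but the substance of what has to be proved; as it stands your text is a correct plan for a qualitative version, not a proof of the stated corollary.
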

	
\rem Using a rescaling argument, one can see that the best	
value for the parameter $L=L(l,\delta)$ satisfies the following property: for all $l,\delta \geq 0$ and $\lambda >0$, $L(\lambda l, \lambda \delta) = \lambda L(l,\delta)$.
This allows us to define a parameter $L_S$ that will be use all the way through.

\begin{defi}
	\label{def: constant LS}
	Let $L(l,\delta)$ be the infimum value for the parameter $L$ given in \autoref{res: stability (1,l)-quasi-geodesic}.
	We denote by $L_S$  a number larger than $500$ such that $L(10^5\delta,\delta) < L_S\delta$. 
\end{defi}

The stability of quasi-geodesics (\autoref{res: stability (1,l)-quasi-geodesic}) has a discrete analogue that we state below.

\begin{prop}[Stability of discrete quasi-geodesics]{\rm \cite[Proposition~2.9]{Coulon:2014fr}} \quad
\label{res: stability discrete quasi-geodesic}
	Let $l > 0$.
	There exists $L = L(l,\delta)$ which only depends on $\delta$ and $l$ such that
	for every sequence of points  $x_0, \dots, x_m$ in $X$, satisfying that
	\begin{enumerate}
		\item for every $i \in \intvald 1{m-1}$, $\gro {x_{i-1}}{x_{i+1}}{x_i} \leq l$,
		\item for every $i \in \intvald 1{m-2}$, $\dist {x_{i+1}}{x_i} \geq L$.
	\end{enumerate}
	Then for all $i \in \intvald 0m$, the inequality  $\gro {x_0}{x_m}{x_i} \leq l+ 5\delta$ holds.
	Moreover, for all $p \in X$ there exists $i \in \intvald 0{m-1}$ such that $\gro{x_{i+1}}{x_i}p \leq \gro {x_0}{x_m}p + 2l + 8\delta$.
\end{prop}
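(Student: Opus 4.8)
The plan is to interpolate the discrete sequence $x_0,\dots,x_m$ by an honest path and then invoke the continuous stability statement \autoref{res: stability (1,l)-quasi-geodesic}. First I would fix a small auxiliary parameter $\epsilon>0$, to be sent to $0$ at the very end, and use that $X$ is a length space to choose, for each $i\in\intvald 0{m-1}$, a $(1,\epsilon)$-quasi-geodesic $\eta_i$ joining $x_i$ to $x_{i+1}$. Parametrizing the concatenation $\gamma=\eta_0\eta_1\cdots\eta_{m-1}$ by arc length, one obtains a path from $x_0$ to $x_m$ passing through every $x_i$; write $x_i=\gamma(s_i)$, so that $s_0\le s_1\le\dots\le s_m$.

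The heart of the matter is the claim that, if $L$ is taken to be the stability constant of \autoref{res: stability (1,l)-quasi-geodesic} associated with the parameter $2l+4\epsilon$, then $\gamma$ is an $L$-local $(1,2l+4\epsilon)$-quasi-geodesic. To prove this, first observe that by hypothesis~(2) every interior segment $\eta_i$ with $1\le i\le m-2$ has length $\dist{x_{i+1}}{x_i}\ge L$, so no sub-path of $\gamma$ of diameter at most $L$ can meet three of the $\eta_i$'s: such a sub-path either lies inside a single $\eta_i$, where there is nothing to check, or crosses exactly one junction $x_{i+1}$ (the finitely many small-$m$ configurations are handled directly by hypothesis~(1)). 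In the junction case, pick $y$ on $\eta_i$ and $y'$ on $\eta_{i+1}$. Since $y$ lies between the two endpoints of a $(1,\epsilon)$-quasi-geodesic, $\gro{x_i}{x_{i+1}}{y}\le\epsilon/2$, and a short manipulation of the four-point inequality upgrades this to $\gro y z{x_{i+1}}\le\gro{x_i}z{x_{i+1}}+\epsilon/2$ for every $z$; applying this with $z=y'$, then the symmetric inequality centred at $x_{i+1}$ with $x_{i+2}$ in place of $x_i$, and finally hypothesis~(1) at the index $i+1$, I would obtain $\gro y{y'}{x_{i+1}}\le l+\epsilon$. Since $\gamma$ is parametrized by arc length, if $y=\gamma(u)$ and $y'=\gamma(u')$ then $|u-u'|\le\dist y{x_{i+1}}+\dist{x_{i+1}}{y'}+2\epsilon=\dist y{y'}+2\gro y{y'}{x_{i+1}}+2\epsilon\le\dist y{y'}+2l+4\epsilon$, which, together with the trivial inequality $\dist y{y'}\le|u-u'|$, is exactly the statement that this window is a $(1,2l+4\epsilon)$-quasi-geodesic. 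This proves the claim.

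Granting the claim, \autoref{res: stability (1,l)-quasi-geodesic} applies to $\gamma$. Its second item, read with $2l+4\epsilon$ in place of $l$ and applied to $s_0\le s_i\le s_m$, gives $\gro{x_0}{x_m}{x_i}\le l+2\epsilon+5\delta$ for every $i\in\intvald 0m$. As the discrete data is independent of $\epsilon$ and the relevant stability constant is non-decreasing in $\epsilon$, it is enough to require hypothesis~(2) with $L$ equal to that constant for one fixed small $\epsilon_0>0$; letting $\epsilon\to 0$ then yields $\gro{x_0}{x_m}{x_i}\le l+5\delta$. For the ``moreover'' assertion, given $p\in X$ I would pick a point $z$ on $\gamma$ almost realizing $d(p,\gamma)$; it sits on some $\eta_i$, and the $(1,\epsilon)$-quasi-geodesic inequality gives $\dist{x_i}{x_{i+1}}\ge\dist{x_i}z+\dist z{x_{i+1}}-\epsilon$, whence $\gro{x_i}{x_{i+1}}p\le\dist pz+\epsilon/2$. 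Combining this with the third item of \autoref{res: stability (1,l)-quasi-geodesic}, which bounds $d(p,\gamma)$ by $\gro{x_0}{x_m}p+(2l+4\epsilon)+8\delta$, and once more letting $\epsilon\to 0$ along a sequence along which the index $i$ is constant, one gets $\gro{x_{i+1}}{x_i}p\le\gro{x_0}{x_m}p+2l+8\delta$ for some $i\in\intvald 0{m-1}$.

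The main obstacle is the local claim. Hypothesis~(1) controls the Gromov product only at the \emph{vertices} $x_i$, and the delicate point is that this is nonetheless enough to force every length-$\le L$ window of $\gamma$ crossing a junction to be a quasi-geodesic with constants independent of $i$ and $m$. This is precisely where both hypotheses enter — hypothesis~(1) for the junction estimate and hypothesis~(2) to ensure that a window never meets two junctions simultaneously — while everything after the claim is bookkeeping on constants together with an appeal to the already established continuous stability result.
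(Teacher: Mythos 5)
Your argument is correct, and the choice of approach — interpolating the discrete sequence by concatenated $(1,\epsilon)$-quasi-geodesics, verifying the resulting path is an $L$-local $(1,2l+O(\epsilon))$-quasi-geodesic using hypothesis~(1) at the junctions and hypothesis~(2) to rule out windows meeting two interior junctions, and then invoking \autoref{res: stability (1,l)-quasi-geodesic} — is the natural one and matches the cited source: the way the constants $l+5\delta$ and $2l+8\delta$ come exactly from items~(ii) and~(iii) of the continuous corollary with $l$ replaced by $2l$ is a good sanity check. Two small technical points worth making explicit when writing this up: when a window of arc length exactly $L$ coincides with a single interior $\eta_i$ it meets two junctions only at its boundary, so taking $L$ to be, say, one more than the corollary's constant for $l_0=2l+1$ cleanly avoids any degenerate overlap; and in the moreover part the index $i$ produced by the construction depends on the auxiliary $\epsilon$, so one should pass to a subsequence of $\epsilon\to 0$ along which $i$ is constant (finitely many values), as you indeed note.
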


\paragraph{Quasi-convex subsets.}

Let $Y$ be a subset of $X$.
Let $\alpha \geq 0$.
We denote by $Y^{+\alpha}$, the \emph{$\alpha$-neighborhood} of $Y$, i.e. the set of points $x \in X$ such that $d(x, Y) \leq \alpha$.
Let $\eta \geq 0$.
A point $p$ of $Y$ is an \emph{$\eta$-projection} of $x \in X$ on $Y$ if $\dist xp \leq d(x,Y) +\eta$.
A 0-projection is simply called a \emph{projection}.

\begin{defi}
\label{def: quasi-convex}
	Let $\alpha \geq 0$.
	A subset $Y$ of $X$ is \emph{$\alpha$-quasi-convex} if for every $x \in X$ and for every $y,y' \in Y$ the inequality $d(x,Y) \leq \gro y{y'}x + \alpha$ holds.
\end{defi}

\begin{lemm}[Projection on a quasi-convex] {\rm \cite[Chapitre 10, Proposition 2.1]{CooDelPap90}} \quad
\label{res: proj quasi-convex}
	Let $Y$ be an $\alpha$-quasi-convex subset of $X$. 
	Let $x, x' \in X$.
	\begin{enumerate}
		\item \label{enu: proj quasi-convex - gromov product}
		If $p$ is an $\eta$-projection of $x$ on $Y$, then for all $y \in Y$, $\gro xyp \leq \alpha + \eta$.
		\item \label{enu: proj quasi-convex - distance two points }
		If $p$ and $p'$ are respective $\eta$- and $\eta'$-projections of $x$ and $x'$ on $Y$, then 
		\begin{displaymath}
			\dist p{p'} \leq \max \left\{\fantomB \dist x{x'}-\dist xp - \dist {x'}{p'} +2\epsilon, \epsilon \right\},
		\end{displaymath}
		where $\epsilon = 2 \alpha + \eta + \eta' + \delta$.
	\end{enumerate}
\end{lemm}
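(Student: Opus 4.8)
The plan is to establish item~(i) first and then bootstrap it to item~(ii) via the four point inequality.

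For item~(i), I would invoke $\alpha$-quasi-convexity of $Y$ directly at the point $x$ with the two points $y$ and $p$ of $Y$; this gives $d(x,Y) \leq \gro ypx + \alpha$. Combining this with the elementary identity $\gro xyp + \gro ypx = \dist xp$, which follows at once from expanding both Gromov products, one obtains $\gro xyp = \dist xp - \gro ypx \leq \dist xp - d(x,Y) + \alpha$. Since $p$ is an $\eta$-projection of $x$ on $Y$ we have $\dist xp \leq d(x,Y) + \eta$, and the bound $\gro xyp \leq \alpha + \eta$ drops out.

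For item~(ii), the idea is to apply item~(i) twice and then compare distances using the four point inequality~(\ref{eqn: hyperbolicity condition 2}). Applying item~(i) to $x$, its $\eta$-projection $p$, and the point $p' \in Y$ gives $\gro x{p'}p \leq \alpha + \eta$, that is
\begin{displaymath}
	\dist x{p'} \geq \dist xp + \dist p{p'} - 2(\alpha + \eta),
\end{displaymath}
and symmetrically $\dist {x'}p \geq \dist {x'}{p'} + \dist p{p'} - 2(\alpha + \eta')$. Then I would apply the four point inequality to the four points $x, x', p, p'$, choosing the pairing that isolates $\dist x{p'} + \dist {x'}p$ on the left-hand side:
\begin{displaymath}
	\dist x{p'} + \dist {x'}p \leq \max\left\{ \dist x{x'} + \dist p{p'},\; \dist xp + \dist {x'}{p'}\right\} + 2\delta.
\end{displaymath}
Substituting the two lower bounds above into the left-hand side and rearranging in each case: if the maximum is realised by $\dist x{x'} + \dist p{p'}$, one gets $\dist p{p'} \leq \dist x{x'} - \dist xp - \dist {x'}{p'} + 2\epsilon$ with $\epsilon = 2\alpha + \eta + \eta' + \delta$; if it is realised by $\dist xp + \dist {x'}{p'}$, these two terms cancel and one is left with $\dist p{p'} \leq \epsilon$. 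The asserted inequality is precisely the maximum of these two conclusions.

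I do not expect a serious obstacle here: the whole argument is distance bookkeeping once item~(i) is in hand. The only delicate points are choosing the correct pairing in the four point inequality — the alternative pairing, which isolates $\dist xp + \dist {x'}{p'}$, leads nowhere — and keeping track of the constants so that both branches of the case analysis close with exactly the stated $\epsilon$ and $2\epsilon$.
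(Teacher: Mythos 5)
Your proof is correct, and it is essentially the standard argument (and the one in Coornaert--Delzant--Papadopoulos, which the paper cites rather than reproves): item~(i) from quasi-convexity plus the identity $\gro xyp + \gro ypx = \dist xp$, then item~(ii) by applying item~(i) twice to lower-bound $\dist x{p'}$ and $\dist{x'}p$ and feeding these into the four point inequality with the pairing that puts $\dist x{p'}+\dist{x'}p$ on the left. The constants in both branches of the case analysis close to exactly $2\epsilon$ and $\epsilon$ as you computed.
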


\begin{lemm}[Neighborhood of a quasi-convex]  {\rm \cite[Chapitre 10, Proposition 1.2]{CooDelPap90}} \quad
\label{res: neighborhood of a quasi-convex}
	Let $\alpha \geq 0$.
	Let $Y$ be an $\alpha$-quasi-convex subset of $X$.
	For every $A \geq \alpha$, the $A$-neighborhood of $Y$ is $2 \delta$-quasi-convex.
\end{lemm}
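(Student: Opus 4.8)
The plan is to deduce the quasi-convexity inequality for $Y^{+A}$ from that of $Y$ by controlling how the Gromov product $\gro{z}{z'}{x}$ behaves when $z,z'$ are replaced by near-projections onto $Y$. First I would record an elementary estimate, valid in any length space: $d(x,Y^{+A}) \leq \max\{0,\ d(x,Y)-A\}$. Indeed, if $d(x,Y)\leq A$ then $x\in Y^{+A}$; otherwise, given $\epsilon>0$, pick an $\epsilon$-projection $p$ of $x$ on $Y$ and a path from $x$ to $p$ of length at most $\dist xp+\epsilon\leq d(x,Y)+2\epsilon$, then let $q$ be the point of that path at arc length $A$ from $p$ (which exists since the length exceeds $\dist xp\geq d(x,Y)>A$). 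Then $\dist qp\leq A$, so $q\in Y^{+A}$, while $\dist xq\leq d(x,Y)-A+2\epsilon$.

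Now fix $x\in X$ and $z,z'\in Y^{+A}$; the goal is $d(x,Y^{+A})\leq\gro{z}{z'}{x}+2\delta$. If $d(x,Y)\leq A$ this is immediate since the Gromov product and $\delta$ are nonnegative, so assume $d(x,Y)>A$. Choose $\epsilon$-projections $y$ of $z$ and $y'$ of $z'$ on $Y$, so $\dist zy\leq A+\epsilon$ and $\dist{z'}{y'}\leq A+\epsilon$. The core computation is a lower bound on $\gro{z}{z'}{x}$: two applications of the four-point inequality~(\ref{eqn: hyperbolicity condition 1}), inserting first $y$ between $z$ and $z'$ and then $y'$ between $y$ and $z'$, yield
\[
	\gro{z}{z'}{x}\ \geq\ \min\left\{\gro{z}{y}{x},\ \gro{y}{y'}{x},\ \gro{y'}{z'}{x}\right\}-2\delta .
\]
For the middle term, $\alpha$-quasi-convexity of $Y$ applied to $y,y'\in Y$ gives $\gro{y}{y'}{x}\geq d(x,Y)-\alpha$. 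For the outer terms, expanding the definition of the Gromov product and using $\dist xy\geq d(x,Y)$, $\dist xz\geq d(x,Y)-A$ and $\dist zy\leq A+\epsilon$ gives $\gro{z}{y}{x}\geq d(x,Y)-A-\epsilon/2$, and symmetrically for $\gro{y'}{z'}{x}$. Since $A\geq\alpha$, the minimum of the three is $d(x,Y)-A-\epsilon/2$, whence $\gro{z}{z'}{x}\geq d(x,Y)-A-\epsilon/2-2\delta$. Combining with the first step, $d(x,Y^{+A})\leq d(x,Y)-A+2\epsilon\leq\gro{z}{z'}{x}+2\delta+\tfrac{5}{2}\epsilon$, and letting $\epsilon\to 0$ finishes the proof.

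I do not expect a genuine obstacle here; the only points requiring care are the bookkeeping of the auxiliary $\epsilon$'s forced by working in a length space (where projections and distance-realizing paths need not exist exactly) and arranging the two uses of the four-point inequality so that exactly $2\delta$, rather than $3\delta$, is lost.
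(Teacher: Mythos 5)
Your proof is correct. Since the paper cites this lemma from \cite{CooDelPap90} without reproducing a proof, there is no in-paper argument to compare against, but your route — bound $d(x,Y^{+A})$ above by $d(x,Y)-A$ using a near-geodesic in the length space, and bound $\gro{z}{z'}{x}$ below by chaining two applications of the four-point inequality through $\eta$-projections $y,y'\in Y$ — is precisely the standard argument, and the bookkeeping (in particular that only $2\delta$ is lost, and that $A\geq\alpha$ makes the middle term $\gro{y}{y'}{x}\geq d(x,Y)-\alpha$ non-binding) checks out. The only stylistic remark is that the same conclusion drops out slightly more cleanly if you note at the outset that the case $d(x,Y)\leq A$ is trivial and then, in the remaining case, keep a single $\eta$ for both the near-projections and the near-geodesic rather than carrying two independent error parameters; but this changes nothing of substance.
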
 
	
\begin{defi}
\label{def: hull}
	Let $Y$ be a subset of $X$.
	The \emph{hull} of $Y$ denoted by $\hull Y$ is the union of all $(1, \delta)$-quasi-geodesics joining two points of $Y$.
\end{defi}

\begin{lemm}{\rm \cite[Lemma~2.18]{Coulon:2014fr}}
\label{res: hull quasi-convex}
	The hull of any subset of $X$ is $6\delta$-quasi-convex.
\end{lemm}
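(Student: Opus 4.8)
The plan is to prove the stated quasi-convexity directly from \autoref{def: quasi-convex}, reducing it to two elementary facts about $\delta$-hyperbolic spaces: that a single $(1,\delta)$-quasi-geodesic is quasi-convex with a small constant, and that gluing two quasi-convex sets along a common point costs only one extra $\delta$. So fix $a,b\in\hull Y$ and $x\in X$; we must produce a point of $\hull Y$ within $\gro abx+6\delta$ of $x$. By \autoref{def: hull} there are $(1,\delta)$-quasi-geodesics $\gamma_a\ni a$ and $\gamma_b\ni b$ whose endpoints all lie in $Y$; pick one endpoint $u$ of $\gamma_a$ and one endpoint $v$ of $\gamma_b$, and (using that $X$ is a length space, cf.\ the remarks after \autoref{def: quasi-geodesic}) let $\tau$ be a $(1,\delta)$-quasi-geodesic from $u$ to $v$. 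Then $\gamma_a,\gamma_b\subseteq\hull Y$, and $\tau\subseteq\hull Y$ as well since $u,v\in Y$; moreover $u\in\gamma_a\cap\tau$ and $v\in\tau\cap\gamma_b$. Hence $a$ and $b$ both lie in $Z\coloneqq\gamma_a\cup\tau\cup\gamma_b\subseteq\hull Y$, and it suffices to show $d(x,Z)\le\gro abx+6\delta$.

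The first ingredient is that every $(1,\delta)$-quasi-geodesic $\gamma$ is $3\delta$-quasi-convex; it is enough to treat its endpoints $p_0,p_1$, the general case following by restricting to a subsegment. For every point $m$ on $\gamma$ the remarks after \autoref{def: quasi-geodesic} give $\gro{p_0}{p_1}m\le\delta/2$, so (\ref{eqn: hyperbolicity condition 1}) forces $\min\{\gro{p_0}xm,\gro x{p_1}m\}\le 3\delta/2$. Therefore the two closed subsets $\{m\in\gamma:\gro{p_0}xm\le 3\delta/2\}$ and $\{m\in\gamma:\gro x{p_1}m\le 3\delta/2\}$ cover $\gamma$; since $p_0$ lies in the first, $p_1$ in the second, and $\gamma$ is connected, the two subsets meet at some point $m_0$. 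For that point the identity $d(x,m_0)=\gro{p_0}x{m_0}+\gro x{p_1}{m_0}-\gro{p_0}{p_1}{m_0}+\gro{p_0}{p_1}x$ (pure algebra with distances) together with $\gro{p_0}{p_1}{m_0}\ge 0$ yields $d(x,m_0)\le\gro{p_0}{p_1}x+3\delta$, hence $d(x,\gamma)\le\gro{p_0}{p_1}x+3\delta$.

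The second ingredient is a gluing estimate: if $Z_1,Z_2$ are $c_1$- and $c_2$-quasi-convex and $w\in Z_1\cap Z_2$, then $Z_1\cup Z_2$ is $(\max\{c_1,c_2\}+\delta)$-quasi-convex. Indeed, for $a'\in Z_1$ and $b'\in Z_2$ one has $d(x,Z_1\cup Z_2)\le\min\{\gro{a'}wx+c_1,\gro{b'}wx+c_2\}\le\min\{\gro{a'}wx,\gro{b'}wx\}+\max\{c_1,c_2\}$, while (\ref{eqn: hyperbolicity condition 1}) gives $\min\{\gro{a'}wx,\gro w{b'}x\}\le\gro{a'}{b'}x+\delta$; the case $a',b'$ in the same $Z_i$ is immediate. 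Applying this twice to $Z=\gamma_a\cup\tau\cup\gamma_b$ — each piece $3\delta$-quasi-convex by the first ingredient, glued first along $u$ and then along $v$ — shows that $Z$ is $5\delta$-quasi-convex, so $d(x,Z)\le\gro abx+5\delta\le\gro abx+6\delta$, as required.

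I do not expect a genuine obstacle: the only step carrying real content is the $3\delta$-quasi-convexity of a single $(1,\delta)$-quasi-geodesic, and even that reduces to the connectedness trick above combined with the four point inequality; the rest is bookkeeping with (\ref{eqn: hyperbolicity condition 1}). The one place requiring a little care is to use the sharp bound $\gro{p_0}{p_1}m\le\delta/2$ valid for a genuine quasi-geodesic, rather than the weaker $L$-local estimates of \autoref{res: stability (1,l)-quasi-geodesic}: a looser constant there would propagate through the two gluings and overshoot $6\delta$.
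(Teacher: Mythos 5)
Your proof is correct, and in fact gives the slightly sharper constant $5\delta$. The paper only cites \cite[Lemma~2.18]{Coulon:2014fr} without reproducing an argument, so there is no in-text proof to compare against; your self-contained route — connect $a$ to $b$ inside $\hull Y$ by a chain $\gamma_a\cup\tau\cup\gamma_b$ of three $(1,\delta)$-quasi-geodesics, establish that each single $(1,\delta)$-quasi-geodesic is $3\delta$-quasi-convex via the connectedness/IVT trick and the algebraic identity for $d(x,m_0)$, and then apply the gluing estimate twice at the common points $u$ and $v$ — is exactly the kind of elementary chaining argument one expects for this statement. All the estimates check out: the $\delta/2$ bound on $\gro{p_0}{p_1}m$ follows from the remark after \autoref{def: quasi-geodesic} since the path is parametrized by arc length, the covering-by-closed-sets argument legitimately produces $m_0$ with both Gromov products at most $3\delta/2$, and the gluing lemma costs exactly one $\delta$ per gluing by the four-point inequality, yielding $3\delta\to4\delta\to5\delta\le6\delta$.
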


\begin{lemm}{\rm \cite[Lemma~2.19]{Coulon:2014fr}}
\label{res: gromov product and hull}
	Let $Y$ and $Z$ be two subsets of $X$.
	Let $x$ be a point of $X$.
	Assume that for all $y \in Y$ and for all $z \in Z$, the inequality $\gro yzx \leq \alpha$ holds.
	Then for all $y \in \hull Y $ and for all $z \in \hull Z$, we have that $\gro yzx \leq \alpha + 3\delta$.
\end{lemm}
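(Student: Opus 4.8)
The plan is to reduce the assertion in two steps, first replacing $Y$ by $\hull Y$ and then $Z$ by $\hull Z$, each step costing $3\delta/2$. The whole argument rests on the following observation: \emph{if a point $p$ lies on a $(1,\delta)$-quasi-geodesic joining two points $p_1,p_2$ of $X$, then for every $a\in X$ we have $\gro pax \le \max\{\gro{p_1}{a}{x},\gro{p_2}{a}{x}\}+3\delta/2$.} Granting this, fix $y\in\hull Y$ and $z\in\hull Z$. By \autoref{def: hull} we may choose $y_1,y_2\in Y$ together with a $(1,\delta)$-quasi-geodesic through $y$ joining them, and likewise $z_1,z_2\in Z$ for $z$. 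Applying the observation with $p=z$, $\{p_1,p_2\}=\{z_1,z_2\}$ and $a=y_k$ gives $\gro{y_k}{z}{x}=\gro{z}{y_k}{x}\le\alpha+3\delta/2$ for $k=1,2$, since $\gro{z_j}{y_k}{x}\le\alpha$ by hypothesis (as $z_j\in Z$ and $y_k\in Y$, using the symmetry of the Gromov product). Applying the observation a second time with $p=y$, $\{p_1,p_2\}=\{y_1,y_2\}$ and $a=z$ then yields $\gro yzx\le\max_k\gro{y_k}{z}{x}+3\delta/2\le\alpha+3\delta$, which is what we want.

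It remains to prove the observation. Write $p=\gamma(s)$, $p_1=\gamma(t)$ and $p_2=\gamma(t')$ with $t\le s\le t'$; by the remarks following \autoref{def: quasi-geodesic} we have $\gro{p_1}{p_2}{p}\le\delta/2$. The four point inequality (\ref{eqn: hyperbolicity condition 1}), applied to the four points $p_1,x,p_2,p$, gives $\min\{\gro{p_1}{x}{p},\gro{x}{p_2}{p}\}\le\gro{p_1}{p_2}{p}+\delta\le 3\delta/2$. Relabelling $p_1$ and $p_2$ if necessary, we may assume $\gro{p_1}{x}{p}\le 3\delta/2$, i.e. $\dist x{p_1}\ge\dist xp+\dist p{p_1}-3\delta$; in other words $p$ lies within $3\delta$ of a geodesic from $x$ to $p_1$. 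Inserting this inequality together with the triangle inequality $\dist{p_1}a\le\dist{p_1}p+\dist pa$ into the identity $\gro pax-\gro{p_1}{a}{x}=\frac12\left[(\dist px-\dist{p_1}x)+(\dist{p_1}a-\dist pa)\right]$, the terms $\dist p{p_1}$ cancel and we are left with $\gro pax-\gro{p_1}{a}{x}\le 3\delta/2$, which establishes the observation (the maximum in the statement being there only to absorb the relabelling).

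The genuinely delicate point is the order of the two reductions. The hypothesis is available only for bona fide elements of $Y$ and of $Z$, so one must first push $z$ into $\hull Z$ while the other argument is still an \emph{endpoint} $y_k\in Y$, and only afterwards replace $y_k$ by the point $y\in\hull Y$; reversing the order would leave no controlled quantity to start from. Everything else is routine manipulation of the four point inequality — I would only stress that one should work directly with (\ref{eqn: hyperbolicity condition 1}) rather than with the coarser thin-triangle estimate (\ref{res: metric inequalities - thin triangle}), since the latter would inflate the additive error past the sharp value $3\delta$.
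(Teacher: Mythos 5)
Your proof is correct, and the key observation is stated sharply and verified cleanly. The two-step reduction (push $z$ into $\hull Z$ against fixed endpoints $y_k\in Y$ at a cost of $3\delta/2$, then push $y$ into $\hull Y$ at another cost of $3\delta/2$) lands exactly on the required bound $\alpha+3\delta$, and the verification of the observation via the four point inequality plus the identity for $\gro pax-\gro{p_1}{a}{x}$ is accurate. Note that the paper itself does not prove this lemma but cites it verbatim from \cite[Lemma~2.19]{Coulon:2014fr}; your argument is the natural one and is along the lines of the cited proof.

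One small caveat on your closing paragraph. You describe the \emph{order} of the two reductions as ``genuinely delicate'' and assert that reversing it ``would leave no controlled quantity to start from.'' That is not quite right: the argument is symmetric in $Y$ and $Z$, so one could equally well first bound $\gro{y}{z_j}{x}\le\alpha+3\delta/2$ by pushing $y$ into $\hull Y$ against the endpoints $z_j\in Z$, and then push $z$ into $\hull Z$ to conclude. What matters is that the two replacements are done \emph{sequentially} rather than simultaneously; which of the two goes first is immaterial. This is a purely expository slip and does not affect the validity of the proof.
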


\paragraph{Isometries of a hyperbolic space.}

Let $x$ be a point of $X$.
An isometry $g$ of $X$  is either
\begin{itemize}
	\item \emph{elliptic}, i.e. the orbit $\langle g \rangle x$ is bounded,
	\item \emph{loxodromic}, i.e. the map from $\Z$ to $X$ that sends $m$ to $g^m x$ is a quasi-isometric embedding,
	\item or \emph{parabolic}, i.e. it is neither loxodromic or elliptic.		
\end{itemize}
Note that these definitions do not depend on the point $x$.
In order to measure the action of $g$ on $X$, we use two translation lengths.
By the \emph{translation length} $\len[espace=X]g$ (or simply $\len g$) we mean 
\begin{displaymath}
	\len[espace=X] g \coloneqq \inf_{x \in X} \dist {gx}x.
\end{displaymath}
The \emph{asymptotic translation length} $\len[stable, espace=X] g$ (or simply $\len[stable]g$) is
\begin{displaymath}
	\len[espace=X,stable] g \coloneqq \lim_{n \rightarrow + \infty} \frac 1n \dist{g^nx}x.
\end{displaymath}
These two lengths are related as follows.

\begin{prop}{\rm \cite[Chapitre 10, Proposition 6.4]{CooDelPap90}}\quad
\label{res: translation lengths}
	Let $g$ be an isometry of $X$. 
	Its translation lengths satisfy
	\begin{equation*}
		\len[stable]g \leq \len g \leq \len[stable] g + 16\delta.
	\end{equation*}
\end{prop}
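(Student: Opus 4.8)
The plan is to prove the two inequalities separately, the left one being essentially immediate and the right one requiring the hyperbolic geometry of $X$. For the lower bound $\len[stable]g \leq \len g$, I would fix a point $x$ realizing $\dist{gx}x$ up to an arbitrarily small error $\varepsilon$, and observe by the triangle inequality that $\dist{g^nx}x \leq \sum_{k=0}^{n-1}\dist{g^{k+1}x}{g^kx} = n\dist{gx}x$, since $g$ is an isometry and hence $\dist{g^{k+1}x}{g^kx} = \dist{gx}x$ for all $k$. Dividing by $n$ and letting $n \to +\infty$ gives $\len[stable]g \leq \dist{gx}x \leq \len g + \varepsilon$; then let $\varepsilon \to 0$. (In fact $\len[stable]g \leq \dist{gx}x$ for \emph{every} $x$, so one may simply take the infimum over $x$.)

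For the upper bound $\len g \leq \len[stable]g + 16\delta$, I would fix a point $x$ and set $d = \dist{gx}x$, aiming to show $\dist{gx}x \leq \len[stable]g + 16\delta$ whenever $x$ is ``almost optimal,'' or more robustly to control $\len g$ directly. The key geometric input is the four-point inequality \eqref{eqn: hyperbolicity condition 2} applied to the four points $x$, $gx$, $g^2x$, $g^nx$ (or a suitable telescoping thereof), exploiting that the configuration $(x, gx, \dots, g^nx)$ is, after passing to the subsequence of ``near-geodesic'' behaviour, nearly aligned. Concretely, I would try to show that the Gromov product $\gro x{g^2x}{gx}$ is small — of size $O(\delta)$ — by comparing $\dist x{g^2x}$ with $2d$; if $\dist x{g^2x} \geq 2d - c\delta$ one iterates to get $\dist x{g^nx} \geq nd - (n-1)c\delta$ or similar, forcing $\len[stable]g \geq d - c\delta$. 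The alternative case, $\dist x{g^2x} < 2d - c\delta$ (i.e.\ a genuine ``bend'' at $gx$), should be handled by replacing $x$ with a better basepoint such as an $\eta$-projection onto the segment $[x, g^2x]$, or the midpoint, and showing $\dist{g\tilde x}{\tilde x} < d$, contradicting near-optimality of $x$. This dichotomy — run along the orbit versus fold it in half — is the standard mechanism, and getting the constant down to exactly $16\delta$ is a matter of being careful with \eqref{eqn: hyperbolicity condition 2} rather than the coarser thin-triangles inequality \eqref{res: metric inequalities - thin triangle}.

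The main obstacle I anticipate is the bookkeeping in the upper bound: one must argue uniformly over all basepoints (since $\len g$ is an infimum, not a minimum, and $\len[stable]g$ is a limit that does not see individual basepoints), and squeeze the hyperbolicity constant so that the final additive error is no worse than $16\delta$. One clean route, which I would favour, is to avoid the dichotomy by a direct averaging/subadditivity argument: the function $n \mapsto \dist{g^nx}x$ is subadditive, so $\len[stable]g = \inf_n \tfrac1n\dist{g^nx}x$; then it suffices to produce a single $n$ and a single point $y$ with $\dist{gy}y \leq \tfrac1n\dist{g^nx}x + 16\delta$. Taking $n = 2$ and $y$ a well-chosen point of $[x,g^2x]$ (so that $gy$ is close to $[gx, g^3x]$, which is Hausdorff-close to $[x,g^2x]\cup[g^2x,g^4x]$), one estimates $\dist{gy}y$ via the hyperbolic inequality; the contribution of the $\delta$'s from two applications of \eqref{eqn: hyperbolicity condition 2} and one quasi-geodesic stability estimate is what should total $16\delta$. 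Since this is a cited result \cite[Chapitre 10, Proposition 6.4]{CooDelPap90}, I would in the write-up simply invoke it, but the sketch above is how one reconstructs the constant.
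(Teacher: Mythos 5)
The paper does not contain a proof of this proposition; it is imported verbatim from \cite[Chapitre~10, Proposition~6.4]{CooDelPap90}, and you correctly note that the write-up should simply invoke the citation. Your argument for the lower bound $\len[stable]g\leq\len g$ is complete and correct. For the upper bound your overall strategy (either the orbit runs straight, or there is a bend at $gx$ and a better basepoint can be found near $[x,g^2x]$) is the right mechanism, but the ``clean route'' you favour at the end has a genuine logical gap: producing a single $n$ and a single point $y$ with $\dist{gy}y\leq\tfrac1n\dist{g^nx}x+16\delta$ does \emph{not} suffice. For a fixed small $n$, say $n=2$, the quantity $\tfrac12\dist{g^2x}x$ can be far above $\len[stable]g$ (it only converges to $\len[stable]g$ as $n\to\infty$), so this yields a bound much weaker than the one claimed. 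What your midpoint idea actually gives is the quantified statement that for \emph{every} $x$, the midpoint $m$ of $[x,g^2x]$ satisfies $\dist{gm}m\leq\tfrac12\dist{g^2x}x+8\delta$, hence $\len g\leq\tfrac12\len{g^2}+8\delta$. One must then iterate this with $g$ replaced by $g^{2^k}$, use that $\tfrac1{2^k}\len{g^{2^k}}\to\len[stable]g$ (which requires the already-proven lower bound applied to $g^{2^k}$ together with $\tfrac1n\len{g^n}\leq\tfrac1n\dist{g^nx}x$), and sum the geometric series $8\delta\sum_{k\geq0}2^{-k}=16\delta$ to land on the advertised constant. Since you ultimately defer to the reference, the gap is harmless in practice, but the sentence ``it suffices to produce a single $n$\ldots'' should be corrected to this iterative form.
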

The isometry $g$ is loxodromic if and only if its asymptotic translation length is positive \cite[Chapitre 10, Proposition 6.3]{CooDelPap90}.
In this case $g$ fixes exactly two points of $\partial X$ \cite[Chapitre 10, Proposition 6.6]{CooDelPap90} which are
\begin{displaymath}
	g^-  \coloneqq \lim_{n \rightarrow - \infty} g^nx \text{ and } g^+  \coloneqq \lim_{n \rightarrow + \infty} g^nx.
\end{displaymath}

Recall that $L_S$ is the parameter given by the stability of quasi-geodesics (see \autoref{def: constant LS}).

\begin{defi}
\label{def: cylinder loxodromic element}
	Let $g$ be a loxodromic isometry of $X$.
	We denote by $\Gamma_g$ the union of all $L_S\delta$-local $(1,\delta)$-quasi-geodesics joining $g^-$ to $g^+$.
	The \emph{cylinder} of $g$, denoted by $Y_g$, is the $20\delta$-neighborhood of $\Gamma_g$.
\end{defi}

\rem Note that if $g$ is a loxodromic isometry of $X$,
	then both $\Gamma_g$ and $Y_g$ are invariant under the 
	$\langle g\rangle$-action.

\begin{lemm}{\rm \cite[Lemma~2.31]{Coulon:2014fr}} \quad
\label{res: cylinder strongly quasi-convex}
	Let $g$ be a loxodromic isometry of $X$.
	The cylinder of $g$ is $2\delta$-quasi-convex.
\end{lemm}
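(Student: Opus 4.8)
The plan is to deduce the $2\delta$-quasi-convexity of $Y_g$ from the structure of $\Gamma_g$ together with the stability of local quasi-geodesics and the behaviour of neighborhoods of quasi-convex sets. First I would establish that $\Gamma_g$ itself is quasi-convex with a controlled constant: since $\Gamma_g$ is by definition the union of all $L_S\delta$-local $(1,\delta)$-quasi-geodesics joining the fixed points $g^-$ and $g^+$, any two of these paths stay within Hausdorff distance $2\delta + 5\delta$ of one another by part (iv) of \autoref{res: stability (1,l)-quasi-geodesic} (applied with $l=\delta$, noting $L_S\delta \geq L(\delta,\delta)$ since $L_S$ is chosen large enough). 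Fixing one reference path $\gamma_0$ from $g^-$ to $g^+$, every point of $\Gamma_g$ lies within $7\delta$ of $\gamma_0$, so $\Gamma_g$ is contained in the $7\delta$-neighborhood of $\gamma_0$; conversely $\gamma_0 \subseteq \Gamma_g$. Part (iii) of \autoref{res: stability (1,l)-quasi-geodesic} tells us that $\gamma_0$ is $(\delta + 8\delta)$-quasi-convex, i.e. $9\delta$-quasi-convex, and then \autoref{res: neighborhood of a quasi-convex} shows that its $7\delta$-neighborhood is $2\delta$-quasi-convex. Since $\Gamma_g$ is sandwiched between $\gamma_0$ and this neighborhood, a short argument using the definition of quasi-convexity (every point of $\Gamma_g$ is within $7\delta$ of a point of $\gamma_0$, and conversely) gives that $\Gamma_g$ is quasi-convex with constant on the order of a small multiple of $\delta$.

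Next I would pass from $\Gamma_g$ to its $20\delta$-neighborhood $Y_g$. Once $\Gamma_g$ is known to be $\alpha$-quasi-convex for some explicit $\alpha$ bounded by a small multiple of $\delta$ — crucially with $\alpha \leq 20\delta$ — \autoref{res: neighborhood of a quasi-convex} applies directly: for $A = 20\delta \geq \alpha$, the $A$-neighborhood of $\Gamma_g$ is $2\delta$-quasi-convex, which is exactly the claim that $Y_g$ is $2\delta$-quasi-convex. So the whole argument reduces to pinning down the quasi-convexity constant of $\Gamma_g$ and checking it does not exceed $20\delta$.

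The main obstacle I anticipate is bookkeeping the constants so that the quasi-convexity constant of $\Gamma_g$ genuinely stays below $20\delta$ — this is why the $20\delta$ in \autoref{def: cylinder loxodromic element} is chosen with room to spare, and why $L_S$ is required to be large (larger than $500$, with $L(10^5\delta,\delta) < L_S\delta$) in \autoref{def: constant LS}: one needs $L_S\delta$ to exceed the stability threshold $L(\delta,\delta)$ comfortably so that all the $L_S\delta$-local $(1,\delta)$-quasi-geodesics are genuinely close to one another and to $\gamma_0$. A secondary subtlety is that $\Gamma_g$ joins points $g^-, g^+$ \emph{at infinity} rather than points of $X$, so I would either work with truncations of the quasi-geodesics and pass to a limit, or cite the version of \autoref{res: stability (1,l)-quasi-geodesic}(iv) that explicitly allows endpoints in $\partial X$. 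Beyond this the argument is routine application of the quoted lemmas, so I would not expect any genuinely hard step; it is essentially a constant-chasing exercise built on the stability of quasi-geodesics and \autoref{res: neighborhood of a quasi-convex}.
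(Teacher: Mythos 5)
The paper itself does not prove this lemma; it cites it directly as \cite[Lemma~2.31]{Coulon:2014fr}, so there is no in-paper proof to compare against, and the question is whether your argument is sound on its own. The overall strategy — reduce to a quasi-convexity estimate for $\Gamma_g$ and then apply the neighborhood lemma with $A = 20\delta$ — is the right one, but two specific steps are off. First, the claim that \autoref{res: neighborhood of a quasi-convex} makes the $7\delta$-neighborhood of the $9\delta$-quasi-convex path $\gamma_0$ into a $2\delta$-quasi-convex set is backwards: that lemma needs $A \geq \alpha$, and $7\delta < 9\delta$, so it simply does not apply. (This intermediate assertion also plays no real role, since quasi-convexity is not inherited by subsets, so knowing $\Gamma_g$ sits inside a quasi-convex set tells you nothing about $\Gamma_g$ itself.)

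Second, and more seriously, the constant-chasing that you flag as the "main obstacle" does not work in the form you sketch. If you approximate both $y$ and $y'$ by points $z, z'$ on the reference path $\gamma_0$ (each within $7\delta$), the Gromov product can grow by $\dist yz + \dist{y'}{z'} \leq 14\delta$, so you get $d(x,\Gamma_g) \leq d(x,\gamma_0) \leq \gro z{z'}x + 9\delta \leq \gro y{y'}x + 23\delta$, and $23\delta > 20\delta$, so \autoref{res: neighborhood of a quasi-convex} no longer applies with $A = 20\delta$. The fix is to avoid the fixed reference path altogether: with $y$ on one $L_S\delta$-local $(1,\delta)$-quasi-geodesic $\gamma_1$ and $y'$ on another, use \autoref{res: stability (1,l)-quasi-geodesic}~(iv) to find $z'$ on $\gamma_1$ with $\dist{y'}{z'} \leq 7\delta$, keep $y$ where it is, and apply part~(iii) to $\gamma_1$. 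Then $\gro y{z'}x \leq \gro y{y'}x + 7\delta$ and $d(x,\Gamma_g) \leq d(x,\gamma_1) \leq \gro y{z'}x + 9\delta \leq \gro y{y'}x + 16\delta$, so $\Gamma_g$ is $16\delta$-quasi-convex and the $20\delta$-neighborhood is $2\delta$-quasi-convex by \autoref{res: neighborhood of a quasi-convex}. So the approach succeeds, but only after you project onto the path already containing one of the two points rather than onto a common $\gamma_0$ — this is exactly the $7\delta$ of slack you cannot afford to lose.
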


\begin{lemm}
\label{res: moving point of a cylinder}
	Let $g$ be a loxodromic isometry of $X$.
	For every $x \in X$, $\dist {gx}x  \leq \len g + 2 d(x, Y_g) + 112\delta$.
\end{lemm}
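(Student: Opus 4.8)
The plan is to reduce the bound on $\dist{gx}x$ to a statement about a point lying on (or very near) the cylinder $Y_g$, and then to estimate the displacement there using the structure of $\Gamma_g$ as a union of $L_S\delta$-local $(1,\delta)$-quasi-geodesics that are $\langle g\rangle$-invariant. First I would pick a point $y \in \Gamma_g$ realizing (up to a bounded error) the distance $d(x, \Gamma_g)$, so that $d(x, Y_g) \geq d(x,\Gamma_g) - 20\delta$, i.e. $d(x, \Gamma_g) \leq d(x, Y_g) + 20\delta$. The triangle inequality gives $\dist{gx}x \leq \dist{gx}{gy} + \dist{gy}y + \dist yx = 2\dist xy + \dist{gy}y$, since $g$ is an isometry. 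So it suffices to bound $\dist{gy}y$ by $\len g + (\text{bounded error})$ for a point $y$ that we may take on $\Gamma_g$.

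The heart of the matter is therefore: for $y$ on an $L_S\delta$-local $(1,\delta)$-quasi-geodesic $\gamma$ from $g^-$ to $g^+$, show $\dist{gy}y \leq \len g + c\delta$ for an explicit constant $c$. Since $\Gamma_g$ is $\langle g\rangle$-invariant (remark after \autoref{def: cylinder loxodromic element}) and $2\delta$-quasi-convex as part of $Y_g$ being $2\delta$-quasi-convex (\autoref{res: cylinder strongly quasi-convex}), the point $gy$ again lies in $Y_g$. I would take a point $z \in X$ with $\dist{gz}z$ close to $\len g$ (within $16\delta$ say, using \autoref{res: translation lengths}, or more simply close to the infimum directly), project $z$ onto the quasi-convex set $Y_g$ to get a point $p \in Y_g$, and use \autoref{res: proj quasi-convex}\ref{enu: proj quasi-convex - distance two points } applied to $z$ and $gz$ (whose projection $gp$ is the $g$-translate of $p$): this controls $\dist{p}{gp}$ in terms of $\dist z{gz} - \dist zp - \dist{gz}{gp} = \dist z{gz} - 2\dist zp$, giving $\dist p{gp} \leq \dist{gz}z + (\text{bounded})$, hence $\dist p{gp} \leq \len g + (\text{bounded})$. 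Then, comparing the two points $y$ and $p$ of the $2\delta$-quasi-convex set $Y_g$ and using that $Y_g$ sits within $20\delta$ of $\Gamma_g$, one transports the bound from $p$ to $y$: moving along $\Gamma_g$ between nearby points changes the $g$-displacement by only a bounded amount, because $\Gamma_g$ is (globally, by \autoref{res: stability (1,l)-quasi-geodesic}\ref{} since $L_S\delta \geq L(10^5\delta,\delta)$) a $(2,\delta)$-quasi-geodesic along which $g$ acts by a translation-like map.

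I expect the main obstacle to be bookkeeping the constant $112\delta$ honestly: one must carefully combine the $20\delta$ thickening in the definition of $Y_g$, the $2\delta$-quasi-convexity constant of $Y_g$, the $\delta$ (and $5\delta$, $8\delta$) errors from stability of $L_S\delta$-local $(1,\delta)$-quasi-geodesics, and the $2\epsilon$ term from the projection lemma (with $\epsilon = 2\alpha + \eta + \eta' + \delta$ and $\alpha = 2\delta$), without ever using $\len[stable]g$ in place of $\len g$ (which would cost an extra $16\delta$). The cleanest route is probably: bound $d(gx, \Gamma_g)$ directly by $d(x,\Gamma_g) + 20\delta + \text{(small)}$ — since $gx$ projects onto $\Gamma_g$ near the $g$-image of a projection of $x$ — so that a geodesic-type path from $x$ to $gx$ passes within a controlled distance of $\Gamma_g$, and then invoke that $\Gamma_g$ has "width" comparable to $\len g$ under $g$. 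No deep new idea is needed; it is a matter of assembling the quasi-convexity and stability estimates already collected in this section and chasing the numerics so that they close up below $112\delta$.
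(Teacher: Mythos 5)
Your outer decomposition matches the paper's: by the triangle inequality, for any $y \in Y_g$ one has $\dist{gx}x \leq \dist{gx}{gy} + \dist{gy}y + \dist yx = 2\dist xy + \dist{gy}y$, so taking $y$ to be an $\eta$-projection of $x$ on $Y_g$ reduces the lemma to the claim that every $y \in Y_g$ satisfies $\dist{gy}y \leq \len g + 112\delta$. (Note that in the paper one projects directly onto $Y_g$, not onto $\Gamma_g$; projecting onto $\Gamma_g$ as you propose already spends an extra $40\delta$ before the inner estimate even starts, so the numerics would not close under your route.)

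The real gap is in the inner estimate. What you produce via \autoref{res: proj quasi-convex} is a bound $\dist{p}{gp} \leq \len g + O(\delta)$ for one particular point $p$, namely a projection onto $Y_g$ of a near-minimizer $z$ of displacement. But the lemma requires this bound for an \emph{arbitrary} point $y$ of $Y_g$, and in general $y$ and $p$ are far apart along the cylinder (the given $x$ is arbitrary). Your ``transport'' step — that ``moving along $\Gamma_g$ between nearby points changes the $g$-displacement by only a bounded amount'' — is exactly the assertion that displacement is approximately constant along the axis, which is the substance of what must be proved; as written it is circular, and it is also quantitatively insufficient since a bound valid only for ``nearby'' points does not propagate over an unbounded distance without further argument. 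The paper does not try to re-derive this: it invokes \cite[Lemma~2.32]{Coulon:2014fr}, which places $Y_g$ inside the $52\delta$-neighborhood of the set $A_g = \{z : \dist{gz}z < \len g + 8\delta\}$, whence $\dist{gy}y \leq \len g + 8\delta + 2\cdot 52\delta = \len g + 112\delta$ for all $y \in Y_g$. If you want to avoid that citation, the cleanest elementary substitute, when $\len g$ is large enough, is the $\delta$-nerve discussion following \autoref{def: nerve}: a $\delta$-nerve $\gamma$ satisfies $\gamma(t+T)=g\gamma(t)$ with $T \leq \len g + \delta$ and $Y_g$ lies in its $27\delta$-neighborhood, giving $\dist{gy}y \leq \len g + 55\delta$ directly; but you would still need to treat the case $\len g \leq L_S\delta$ separately, which your proposal does not address.
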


\begin{proof}
	Let us denote by $A_g$ the set of points $z \in X$ such that $\dist{gz}z < \len g + 8\delta$.
	It is known that $Y_g$ lies in the $52 \delta$-neighborhood of $A_g$ \cite[Lemma~2.32]{Coulon:2014fr},
	implying that for $y\in Y_g$, $|y-gy|\leq \len g + 112\delta$.
	Let $x$ be a point of $X$ and $y$ a $\eta$-projection of $x$ on $Y_g$.
	It follows that 
	\begin{align*}
		\dist{gx}x &\leq \dist {gx}{gy} + \dist {gy}{y} + \dist yx \\
		&\leq d(gx, Y_g)+\eta+ \dist{gy}y +   d(x,Y_g)  +  \eta \leq \len g   + 112 \delta + 2 d(x,Y_g) +2 \eta.
	\end{align*}
	The last inequality holds for every $\eta >0$ which completes the proof.
\end{proof}

\begin{defi}
\label{def: nerve}
	Let $g$ be an isometry of $X$. 
	Let $l \geq 0$.
	A path $\gamma \colon \R \rightarrow X$ is called an \emph{$l$-nerve} of $g$ if there exists $T \in \R$ with $\len g \leq T \leq \len g +l$ such that $\gamma$ is a $T$-local $(1, l)$-quasi-geodesic and for every $t \in \R$, $\gamma(t+T) = g\gamma(t)$.
	The parameter $T$ is called the \emph{fundamental length} of $\gamma$.
\end{defi}

\rem
For every $l >0$, one can construct an $l$-nerve of $g$ as follows.
Let $\eta >0$.
There exists $x \in X$ such that $\dist {gx}x < \len g + \eta$.
Let $\gamma\colon \intval 0T \rightarrow X$ be a $(1, \eta)$-quasi-geodesic joining $x$ to $gx$.
In particular $\len g \leq T < \len g + 2\eta$.
We extend $\gamma$ into a path $\gamma \colon \R \rightarrow X$ in the following way: for every $t \in [0,T)$, for every $m \in \Z$, $\gamma(t+mT) = g^m\gamma(t)$.
It turns out that $\gamma$ is a $T$-local $(1,2\eta)$-quasi-geodesic.
Thus if $\eta$ is chosen sufficiently small then $\gamma$ is an $l$-nerve.

\paragraph{}This kind of path will be used to simplify some proofs.
Let $\gamma$ be a $\delta$-nerve of $g$.
If $\len g > L_S\delta$ (in particular $g$ is loxodromic) then $\gamma$ is contained in $\Gamma_g\subseteq Y_g$.
By stability of quasi-geodesics (Corollary \ref{res: stability (1,l)-quasi-geodesic}~(iii)) $\gamma$ is actually $9\delta$-quasi-convex.
Moreover it joins $g^-$ to $g^+$.
By Corollary \ref{res: stability (1,l)-quasi-geodesic}~(iv), any other $(1,\delta)$-quasi-geodesic $\gamma'$ joining
$g^-$ and $g^+$ is at Hausdorff distance at most $7\delta$ of $\gamma$.
Thus $Y_g$ lies in the $27\delta$-neighborhood of $\gamma$.
Hence it provides a $g$-invariant line than can advantageously be used as a substitution for a cylinder.

%
\section{Rotation families}
%
\label{sec: rotations families}

\paragraph{Original settings.} 
In this section we extend the framework of rotation families given by F.~Dahmani, V.~Guirardel and D.~Osin in \cite{Dahmani:2011vu}.
Let $G$ be a group acting by isometries on a $\delta$-hyperbolic length space $X$.

\begin{defi}
\label{def: rotation family}
	Let $\sigma >0$.
	A \emph{$\sigma$-rotation family} is a non-empty collection $\mathcal R$ of pairs $(H,v)$ where $H$ is a subgroup of $G$  and $v$ a point of $X$ satisfying the following properties.
	\begin{labelledenu}[R]
		\item \label{enu: rotation family - large angle}
		For every $(H,v) \in \mathcal R$,  for every $x \in B(v, \sigma/10)$, and for every $h \in H\setminus\{1\}$, the equality $\dist{hx}x = 2\dist vx$ holds.
		\item \label{enu: rotation family - apices apart}
		For every $(H,v), (H',v') \in \mathcal R$, if $(H,v) \neq (H',v')$, then $\dist v{v'} \geq \sigma$.
		\item \label{enu: rotation family - G invariant}
		 For all $g \in G$ and for all $(H,v) \in \mathcal R$, the pair $ (gHg^{-1}, gv)$ belongs to $\mathcal R$. In particular, $\mathcal R$ has a natural structure of $G$-set.
	\end{labelledenu}
\end{defi}

\rem It follows from~\ref{enu: rotation family - apices apart} and \ref{enu: rotation family - G invariant} that for every $(H,v) \in \mathcal R$, $H$ is actually a normal subgroup of $\stab v$.

\notas Let $(H,v) \in \mathcal R$. The idea is that each element  $h \in H$ acts on $X$ like a rotation of center $v$ and very large angle - see Axiom~\ref{enu: rotation family - large angle}.
Therefore $v$ is called an \emph{apex} and $H$ a \emph{rotation group}.
If $\mathcal S$ is a subset of $\mathcal R$ denote by $v(\mathcal S)$ the set of all apices $v$ such that $(H,v) \in \mathcal S$.
Similarly $H(\mathcal S)$ stands for the set of all rotation groups $H$ with $(H,v) \in \mathcal S$.
Given a subset $Y$ of $X$, we denote by $K_Y$ the subgroup of $G$ generated by all the rotation groups $H$ where $(H,v) \in \mathcal R$ and $v \in Y$.
The (normal) subgroup generated by all the rotation groups is simply denoted by $K$.

\paragraph{}In their work \cite{Dahmani:2011vu}, F.~Dahmani, V.~Guirardel and D.~Osin  use the properties of such a family to study the structure of $K$ and the quotient $\bar G = G/K$ .
Among other things, they prove the following facts.
See also \cite{Coulon:2014fr} for a slightly different exposition of the last two points.

\begin{theo}
\label{res: standard rotation family}
	There exists $\sigma_0> 0$ which only depends on $\delta$, such that for every  $\sigma \geq \sigma_0$, 
	and every $\sigma$-rotating family $\mathcal{R}$,  the following holds.
	\begin{enumerate}
		\item There exists a subset $\mathcal S$ of $\mathcal R$ such that $K$ is isomorphic to the free product of the element of $H(\mathcal S)$.
		\item The subgroup $K$ acts properly on $X\setminus v(\mathcal R)$.
		\item The quotient $\bar X  = X / K$ is a $\bar \delta$-hyperbolic length space with $\bar \delta \leq 900 \delta$.
	\end{enumerate}
\end{theo}
For a proof of the \autoref{res: standard rotation family}, see \cite[Theorem 5.3]{Dahmani:2011vu} for (i) (or \autoref{res: advanced rotation family} below),  \cite[Corollary 3.12]{Coulon:2014fr} for (ii) and \cite[Propositions 3.14 and 3.18]{Coulon:2014fr} for (iii) (or, \cite[Proposition 5.28]{Dahmani:2011vu} for (ii) and (iii)).
\paragraph{Extended windmill.}
The goal of this section is to improve the approach of F.~Dahmani, V.~Guirardel and D.~Osin in order to study the structure of the subgroup of $G$ generated by $K$ and some subgroup of $G$.
More precisely we prove the following statement.

\begin{theo}
\label{res: advanced rotation family}
	There exists $\sigma_0 >0$ which only depends on $\delta$ such that the following holds.
	Assume that $\sigma \geq \sigma_0$.
	Let $\mathcal R$ be an $\alpha$-rotating family.	
	Let $Y$ be a $2\delta$-quasi-convex subset of $X$ and $N$ a subgroup of $G$ stabilizing $Y$ with the following properties
	\begin{enumerate}
		\item For every $(H,v) \in \mathcal R$, for every $h \in H \setminus \{1\}$, for every $y,y' \in Y$, $\gro y{hy'}v \leq 100\delta$.
		\item For every $(H,v) \in \mathcal R$, $\stab v \cap N = \{1\}$.
	\end{enumerate}
	Then there exists a subset $\mathcal S$ of $\mathcal R$ such that the subgroup generated by $N$ and $K$ is isomorphic to the free product of $N$ and the elements of $H(\mathcal S)$.
\end{theo}

If $N$ is trivial then we recover the first point of \autoref{res: standard rotation family}.
The rest of this section is dedicated to the proof of the theorem.
For that, we extend the notion of windmill (see \autoref{def: extended windmill}).
But first we need to define $\sigma_0$.
Applying to \autoref{res: stability discrete quasi-geodesic} with $l=105\delta$ there exists $\sigma_0=L(105\delta,\delta)$ such that for any sequence of points  $y_0, \dots, y_{m+1}$ in $X$, satisfying that 
\begin{enumerate}
	\item for every $i \in \intvald 1m$, $\gro{y_{i+1}}{y_{i-1}}{y_i} \leq 105\delta$,
		\item for every $i \in \intvald 1{m-1}$, $\dist{y_{i+1}}{y_i} \geq \sigma_0$,
\end{enumerate}
then, the inequality $\gro {y_0}{y_{m+1}}{y_i} \leq 110\delta$ holds for for every $i \in \intvald 0{m+1}$.
Moreover, for all $x \in X$, there exists $i \in \intvald 0m$, such that $\gro{y_{i+1}}{y_i}x \leq \gro {y_0}{y_{m+1}}x + 218\delta$.
Without loss of generality we can assume that $\sigma_0$ is greater than $10^{10}\delta$.

\medskip
From now on we assume that $\mathcal R$ is a $\sigma$-rotation family with $\sigma \geq \sigma_0$.
Let us recall now some basic facts.

\begin{lemm}{\rm \cite[Lemma~3.3]{Coulon:2014fr}}
\label{res: rotation family - small product at the apex}
	Let $(H,v) \in \mathcal R$.
	Let $h \in H\setminus\{1\}$.
	For every $x \in X$, $\gro x{hx}v \leq  2\delta$.
\end{lemm}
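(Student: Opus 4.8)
The plan is to use Axiom~\ref{enu: rotation family - large angle} to pin down how $h$ moves points near $v$, and then to transport that information to an arbitrary $x\in X$ by two applications of the hyperbolicity inequality.

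First I would record two easy consequences of Axiom~\ref{enu: rotation family - large angle}. Applying it with the point $v$ itself gives $\dist{hv}v = 2\dist vv = 0$, so $h$ fixes $v$; hence $\dist{hx}v = \dist xv$ and $\gro x{hx}v = \dist xv - \frac12 \dist x{hx}$ for every $x\in X$. Moreover, if $\dist xv \le \sigma/10$ then Axiom~\ref{enu: rotation family - large angle} applies to $x$ directly, giving $\dist{hx}x = 2\dist xv$ and therefore $\gro x{hx}v = 0$; so from now on I may assume $\dist xv > \sigma/10$.

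The main step is to choose a good auxiliary point. Since $X$ is a length space and $\delta>0$, there is a $(1,\delta)$-quasi-geodesic $\gamma\colon [0,T]\to X$ from $v$ to $x$, parametrized by arc length, and since $T\ge \dist vx>\sigma/10$ I may set $p = \gamma(\sigma/10)$. Then $\dist vp\le \sigma/10$, so Axiom~\ref{enu: rotation family - large angle} applies to $p$ and gives $\dist{hp}p = 2\dist vp$; combined with $\dist v{hp}=\dist vp$ this says exactly that $\gro p{hp}v = 0$, i.e.\ $v$ lies on a geodesic from $p$ to $hp$. On the other hand $p$ lies between $v$ and $x$ along $\gamma$, so the standard estimate for $(1,\delta)$-quasi-geodesics gives $\gro vxp \le \delta/2$ and $\dist vp \ge \sigma/10-\delta$; hence $\gro xpv \ge \dist vp - \delta/2 \ge \sigma/10 - 3\delta/2$, which is far larger than $2\delta$ because $\sigma \ge \sigma_0 \ge 10^{10}\delta$. (This is the only place where the choice of $\sigma_0$ is used.)

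Finally I would chain two instances of (\ref{eqn: hyperbolicity condition 1}), both based at $v$. The first, $\gro p{hp}v \ge \min\{\gro pxv,\gro x{hp}v\}-\delta$, has vanishing left-hand side while $\gro pxv = \gro xpv > \delta$, so it forces $\gro x{hp}v \le \delta$. The second, $\gro x{hp}v \ge \min\{\gro x{hx}v,\gro{hx}{hp}v\}-\delta$, then gives $\min\{\gro x{hx}v,\gro{hx}{hp}v\}\le 2\delta$; but applying the isometry $h^{-1}$, which fixes $v$, yields $\gro{hx}{hp}v = \gro xpv > 2\delta$, so the minimum must be attained by $\gro x{hx}v$, i.e.\ $\gro x{hx}v\le 2\delta$. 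I do not anticipate a genuine obstacle: the only thing requiring care is arranging that the auxiliary point $p$ simultaneously lies in $B(v,\sigma/10)$ (so Axiom~\ref{enu: rotation family - large angle} applies to it), lies essentially on a geodesic from $v$ to $x$ (so $\gro vxp$ is negligible), and has $\dist vp$ large enough that $\gro xpv=\gro{hx}{hp}v$ dominates $2\delta$ — and all three are automatic once $\sigma\ge\sigma_0$.
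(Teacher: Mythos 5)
Your proof is correct: fixing $v$ via Axiom~\ref{enu: rotation family - large angle} applied at $v$, placing an auxiliary point $p$ on a $(1,\delta)$-quasi-geodesic from $v$ to $x$ inside $B(v,\sigma/10)$ so that $\gro p{hp}v=0$ while $\gro xpv=\gro{hx}{hp}v$ is of order $\sigma/10$, and then chaining two instances of the four-point inequality does yield $\gro x{hx}v\leq 2\delta$, with $\sigma\geq\sigma_0$ amply sufficient for the size comparisons. The paper gives no proof here but simply cites \cite[Lemma~3.3]{Coulon:2014fr}, and your argument is essentially the standard one behind that reference, so there is nothing to correct.
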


\begin{lemm}
\label{res: rotation family - small product and far qconvex}
	Let $(H,v) \in \mathcal R$.
	Let $h \in H \setminus \{1\}$.
	Let $Y$ be an $\alpha$-quasi-convex subset of $X$ such that $d(v,Y) > \alpha + 3\delta$.
	For every $y,y' \in Y$ we have $\gro y{hy'}v \leq 3\delta$.
\end{lemm}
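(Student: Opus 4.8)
The plan is to reduce the statement to \autoref{res: rotation family - small product at the apex}, which already tells us that $\gro x{hx}v \leq 2\delta$ for \emph{any} single point $x \in X$. The idea is to feed into that lemma a good choice of $x$: namely an $\eta$-projection of $v$ onto $Y$, which will be close both to $y$ and to $hy'$ after applying the quasi-convexity and the projection estimates of \autoref{res: proj quasi-convex}.

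Concretely, I would first fix $y, y' \in Y$ and $h \in H\setminus\{1\}$, and let $p$ be an $\eta$-projection of $v$ on $Y$ for some small $\eta > 0$ to be sent to $0$ at the end. Since $Y$ is $\alpha$-quasi-convex, \autoref{res: proj quasi-convex}~\ref{enu: proj quasi-convex - gromov product} gives $\gro v{y}p \leq \alpha + \eta$ and $\gro v{y'}p \leq \alpha + \eta$. Next, $hp$ is an $\eta$-projection of $hv = v$ on $hY$; but because $d(v, Y) > \alpha + 3\delta$, the point $v$ is far from $Y$, and I need to transfer the projection estimate across the translate. Using \ref{enu: rotation family - large angle}, $h$ moves points of a small ball around $v$ by exactly twice their distance to $v$; in particular $hp$ is again at distance $d(v,Y)$ from $v$ (up to $\eta$), so $hY$ is also far from $v$, and $\gro v{hy'}{hp} \leq \alpha + \eta$ by applying \autoref{res: proj quasi-convex}~\ref{enu: proj quasi-convex - gromov product} to $hY$ (which is $\alpha$-quasi-convex since $h$ is an isometry). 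The key remaining point is to bound $\gro p{hp}v$: by \autoref{res: rotation family - small product at the apex} applied with $x = p$, we get $\gro p{hp}v \leq 2\delta$ directly.

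Now I would assemble these pieces. From $\gro v{y}p \leq \alpha + \eta$ and $\gro v{hy'}{hp} \leq \alpha + \eta$ and $\gro p{hp}v \leq 2\delta$, together with the four-point inequality (\ref{eqn: hyperbolicity condition 1}) or the thin-triangle inequality (\ref{res: metric inequalities - thin triangle}), one chases the Gromov product $\gro y{hy'}v$: a small Gromov product at $v$ between $y$ and $p$, and between $hy'$ and $hp$, plus a small product at $v$ between $p$ and $hp$, forces $\gro y{hy'}v$ to be small as well. Carrying out this Gromov-product arithmetic carefully and letting $\eta \to 0$ should yield the bound $\gro y{hy'}v \leq 3\delta$ (or some explicit small multiple of $\delta$; the statement claims $3\delta$, and the hypothesis $d(v,Y) > \alpha + 3\delta$ is presumably calibrated precisely so that the projections land in the ball $B(v, \sigma/10)$ where \ref{enu: rotation family - large angle} applies, and so that the constants close up at $3\delta$).

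The main obstacle I anticipate is the bookkeeping needed to justify that the relevant projection points (and their $h$-translates) actually lie in the ball $B(v,\sigma/10)$ on which Axiom~\ref{enu: rotation family - large angle} is valid, and more generally to make all the constants match the claimed $3\delta$ rather than a looser bound like $\alpha + C\delta$. The condition $d(v,Y) > \alpha + 3\delta$ is doing real work here: it is exactly the slack that makes the projection of $v$ onto $Y$ behave well and prevents $Y$ (and $hY$) from creeping into the region near the apex where the rotation hypothesis would distort distances. Everything else is routine hyperbolic-geometry estimation built on the lemmas already in the excerpt.
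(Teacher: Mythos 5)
Your overall instinct — reduce to \autoref{res: rotation family - small product at the apex} plus the four-point inequality, using the quasi-convexity of $Y$ to rule out the unwanted alternative — is exactly right in spirit, but routing the argument through a projection $p$ of $v$ onto $Y$ introduces an extra chaining step that the hypothesis does not pay for, and the constants will not close at $3\delta$. Concretely, if you try to relate $\gro y{hy'}v$ to $\gro p{hp}v$, you must pass through \emph{both} intermediate points $p$ and $hp$, which costs $2\delta$ in the four-point inequality; since $\gro p{hp}v \leq 2\delta$ this forces a competing term like $\gro pyv$ to exceed $4\delta$, i.e.\ it requires $d(v,Y) > \alpha + 4\delta$, strictly more than what is assumed. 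You flagged this worry yourself, and it is a real obstruction, not just bookkeeping.

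The paper's proof avoids the projection entirely: take the intermediate point to be $y'$ itself. Then $\gro{y'}{hy'}v \leq 2\delta$ by \autoref{res: rotation family - small product at the apex}, and one application of the four-point inequality gives $\min\{\gro y{y'}v, \gro y{hy'}v\} \leq \gro{y'}{hy'}v + \delta \leq 3\delta$. Quasi-convexity applied directly (no projection) gives $\gro y{y'}v \geq d(v,Y) - \alpha > 3\delta$, so the minimum must be $\gro y{hy'}v$, and you are done. This uses only one $\delta$-loss from the four-point inequality, which is exactly what the $3\delta$ slack in the hypothesis is calibrated for. Two further small points: your concern about whether the projection lands in $B(v,\sigma/10)$ is not needed, since the lemma you invoke (``small product at the apex'') holds for an arbitrary point of $X$; and $\dist v{hp} = \dist vp$ follows from $h$ being an isometry fixing $v$, with no appeal to Axiom~\ref{enu: rotation family - large angle}.
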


\begin{proof}	Let $y, y' \in Y$.
	Combining the four point inequality (\ref{eqn: hyperbolicity condition 1}) with \autoref{res: rotation family - small product at the apex} we get
	\begin{equation*}
		\min\left\{ \gro y{y'}v,  \gro y{hy'}v \right\} \leq \gro {y'}{hy'}v + \delta \leq 3\delta.
	\end{equation*}
	Recall that since $Y$ is $\alpha$-quasi-convex (Definition \ref{def: quasi-convex}),
	we  have $\gro y{y'}v \geq d(v,Y) - \alpha > 3 \delta$.
	Consequently, the minimum cannot be achieved by $ \gro y{y'}v$, and hence $\gro y{hy'}v \leq 3\delta$.
\end{proof}


\begin{defi}
\label{def: extended windmill}
	Let $W$ be a subset of $X$, $N$ a subgroup of $G$ and $V$ a subset of $v(\mathcal R)$.
	Let $L$ be the subgroup of $G$ generated by $N$ and $K_V$.
	The triple $(W,N,V)$ is an \emph{extended windmill} if the following holds.
	\begin{labelledenu}[W]
		\item \label{enu: extended windmill - quasi-convex}
		$W$ is $2\delta$-quasi-convex
		\item \label{enu: extended windmill - invariant}
		$W$ and $V$ are $L$-invariant.
		\item \label{enu: extended windmill - large angle}
		For every $(H,v) \in \mathcal R$ such that $v \notin V$, for every $h \in H\setminus\{1\}$, for every $x,x' \in W$, $\gro x{hx'}v \leq 100\delta$.
		\item \label{enu: extended windmill - stabilizers}
		 For every $(H,v) \in \mathcal R$ such that $v \notin V$, $\stab v \cap L = \{1\}$.
	\end{labelledenu}
\end{defi}

\rem If $N$ is the trivial group and $V$ is the set of apices contained in $W$, then we roughly recover the definition of windmill given in \cite{Dahmani:2011vu}. 
Four our purpose, $V$ may be a smaller set.
However, if an apex $v$ is not contained in $V$ then the corresponding rotation group $H$ should rotates the points of $W$ with a ``large angle''.

\begin{lemm}
\label{res: V in the 4-delta neighbourhood of W}
	If $(W,N,V)$ is an extended windmill, then $V$ is contained in the $4\delta$-neighbourhood of $W$.
\end{lemm}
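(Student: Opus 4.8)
The plan is to show that every apex $v \in V$ lies within $4\delta$ of $W$ by exploiting the interplay between axioms \ref{enu: extended windmill - invariant} and \ref{enu: extended windmill - large angle}, together with the basic fact that points of $X$ close to an apex get moved a controlled amount by the corresponding rotation group (\autoref{res: rotation family - small product at the apex} and, in the contrapositive direction, \autoref{res: rotation family - small product and far qconvex}). The key observation is that if some $v \in V$ were \emph{far} from $W$, then on one hand every $h \in H \setminus \{1\}$ in the rotation group at $v$ would satisfy $\gro y{hy'}v \leq 3\delta$ for all $y,y' \in W$ by \autoref{res: rotation family - small product and far qconvex} (using that $W$ is $2\delta$-quasi-convex by \ref{enu: extended windmill - quasi-convex}, so $\alpha = 2\delta$ and one needs $d(v,W) > 5\delta$); but on the other hand $H \leq \stab v$ and $v \in V$ forces $H \leq L$, so $H$ stabilizes $W$ by \ref{enu: extended windmill - invariant}, which means $hW = W$ and $h$ acts on $W$ without ``folding'' it toward $v$. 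I would make this tension precise to derive a contradiction.

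Concretely, I would argue as follows. Suppose $v \in V$ with $d(v,W) > 4\delta$; I want a contradiction (one can even afford the weaker hypothesis $d(v,W) > 5\delta$ and then handle the boundary case, but aiming directly at $4\delta$ via a sharper estimate is cleaner). Let $(H,v) \in \mathcal R$ be the corresponding pair and pick $h \in H \setminus \{1\}$; since $H \leq \stab v \leq L$ (as $v \in V$) and $W$ is $L$-invariant by \ref{enu: extended windmill - invariant}, we have $hW = W$. Now take a point $y \in W$ realizing (up to a small error $\eta$) the distance $d(v,W)$, i.e. an $\eta$-projection of $v$ on $W$. By \autoref{res: proj quasi-convex}\ref{enu: proj quasi-convex - gromov product} applied with the $2\delta$-quasi-convex set $W$, for every $w \in W$ we get $\gro vwy \leq 2\delta + \eta$. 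Apply this with $w = h^{-1}y \in W$: then $\gro v{h^{-1}y}y \leq 2\delta + \eta$, and translating by $h$ (an isometry fixing $v$) gives $\gro {v}{y}{hy} \leq 2\delta + \eta$. Combining with $\gro y{hy}v \leq 2\delta$ from \autoref{res: rotation family - small product at the apex} and the four-point inequality, the three points $v, y, hy$ form a ``thin tripod'' all of whose legs from each vertex are short; feeding this into the triangle inequality $\dist vy + \dist v{hy} = \dist vy + \dist vy$ against $\gro y{hy}v$ and $\gro vy{hy}$ bounds $d(v,W) = \dist vy$ (up to $\eta$ and a bounded multiple of $\delta$) and, after letting $\eta \to 0$, contradicts $d(v,W) > 4\delta$.

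The main obstacle is tracking the constant precisely enough to land exactly at $4\delta$ rather than some larger multiple of $\delta$: the naive combination of \autoref{res: rotation family - small product at the apex} with the four-point inequality tends to lose several multiples of $\delta$. I expect the cleanest route is to observe that the point $hy$ lies on a $(1,\delta)$-quasi-geodesic $\gamma$ from $v$ to $v$ only trivially, so instead I should use that $\dist v{hy} = \dist vy$ (isometry) and bound $\dist vy \le \gro{y}{hy}{v} + (\text{error})$ directly: since $\gro y{hy}v \le 2\delta$ and $y,hy \in W$ with $W$ $2\delta$-quasi-convex, $d(v,W) \le \gro y{hy}v + 2\delta \le 4\delta$, which is exactly the bound claimed — so in fact the argument collapses to a two-line application of quasi-convexity of $W$ to the pair $\{y,hy\}=\{y,hy\}\subseteq W$ once one knows $hy \in W$, i.e. once \ref{enu: extended windmill - invariant} is invoked. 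I would present it in that streamlined form, with the only subtlety being the justification that $H \leq L$ (immediate from $H \trianglelefteq \stab v$, $v \in V$, and the definition of $K_V$, hence of $L$).
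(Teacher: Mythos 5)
Your final streamlined argument is exactly the paper's proof: take $y\in W$, note $hy\in W$ by Axiom~\ref{enu: extended windmill - invariant} since $H\leq K_V\leq L$, apply \autoref{res: rotation family - small product at the apex} to get $\gro y{hy}v\leq 2\delta$, and conclude $d(v,W)\leq\gro y{hy}v+2\delta\leq 4\delta$ from the $2\delta$-quasi-convexity of $W$. The earlier detours in your writeup (the contradiction framing, the $\eta$-projections, and the appeal to $H\trianglelefteq\stab v$) are all unnecessary --- in particular $H\leq K_V$ holds directly by the definition of $K_V$, with no need for normality in $\stab v$.
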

\begin{proof}
	Let $(H,v)\in \mathcal{R}$, $v\in V$, and $h\in H\setminus \{1\}$.
	Let $y\in W$.
	By \autoref{res: rotation family - small product at the apex},  $\gro y{hy}{v}\leq 2 \delta$.
	By Axiom \ref{enu: extended windmill - invariant},  $W$ is $K_V$-invariant and it follows $hy\in W$. 
	By Axiom \ref{enu: extended windmill - quasi-convex}, $W$ is $2\delta$-quasi-convex subset of $X$,
	and therefore, $d(v,W)\leq \gro y{hy}{v}+2\delta \leq 4\delta$.
\end{proof}

\begin{prop}
\label{res: growing windmill}
	Let $(W,N,V)$ be an extended windmill. 
	Let $L$ be the subgroup generated by $N$ and $K_V$.
	There exists a subset $W'$ of $X$ with the following properties.
	\begin{enumerate}
	\item[(a)] The $(\sigma/10)$-neighbourhood of $W$ is contained in $W'$.
	\item[(b)] The triple $(W',N,V')$ is an extended windmill, where  $V' = W' \cap v(\mathcal R)$.
	\item[(c)] There exists a subset $\mathcal R_0$ of $\mathcal R$ such that the subgroup $L'$ generated by $N$ and $K_{V'}$ is isomorphic to the following free product
	\begin{equation*}
		L' = L * \left( *_{(H,v)\in \mathcal R_0}H\right) 
	\end{equation*}
	
	\end{enumerate}		
\end{prop}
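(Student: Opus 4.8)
The plan is to run the windmill construction of Dahmani--Guirardel--Osin, now carrying along the subgroup $N$; throughout write $L = \langle N, K_V\rangle$ for the group attached to the windmill $(W,N,V)$.

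\textit{Construction of $W'$.} I enlarge $W$ by one ``rotation step''. Set $W_1 = W^{+\sigma/10}$, which is $2\delta$-quasi-convex by \autoref{res: neighborhood of a quasi-convex} since $\sigma/10 > 2\delta$. Let $\mathcal R_1 = \{(H,v)\in\mathcal R : v\in W_1,\ v\notin V\}$ be the new apices close to $W$, and let $L'$ be the subgroup generated by $L$ together with all rotation groups $H$ with $(H,v)\in\mathcal R_1$ (we will see $L' = \langle N,K_{V'}\rangle$, matching the notation of the statement). Let $Z$ be the union of the balls $B(v,\sigma/10)$ over all apices $v\in W_1\cap v(\mathcal R)$, and define $W'$ to be the $6\delta$-neighbourhood of the hull (\autoref{def: hull}) of $L'\cdot(W_1\cup Z)$. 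Then $W'$ is $L'$-invariant (hull and neighbourhoods are equivariant), contains $W^{+\sigma/10}$, and is $2\delta$-quasi-convex, because the hull is $6\delta$-quasi-convex by \autoref{res: hull quasi-convex} and its $6\delta$-neighbourhood is $2\delta$-quasi-convex by \autoref{res: neighborhood of a quasi-convex}; moreover $W'$ contains $B(v,\sigma/10)$ for every apex $v$ in $L'\cdot(W_1\cap v(\mathcal R))$. Set $V' = W'\cap v(\mathcal R)$ and let $\mathcal R_0\subseteq\mathcal R$ be a set of representatives for the $L'$-orbits on $\{(H,v)\in\mathcal R : v\in V'\setminus V\}$. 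With these choices item (a) and the ``$W'$-part'' of (b) are immediate; what remains is the windmill axioms for $(W',N,V')$, the identity $\langle N,K_{V'}\rangle = L'$, and the free product decomposition.

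\textit{Locating the apices of $W'$.} The geometric core is a dichotomy: for any apex $v\in v(\mathcal R)$, either $v\in W'$ or $d(v,W')$ is bounded below by a definite multiple of $\delta$ (larger than $5\delta$). Since $W'$ is assembled from $L'$-translates of $W_1$ and of the balls $B(\cdot,\sigma/10)$, one uses the stability of quasi-geodesics (\autoref{res: stability (1,l)-quasi-geodesic}, \autoref{res: stability discrete quasi-geodesic}) to control which apices the hull can come near, and the separation Axiom~\ref{enu: rotation family - apices apart} (apices are $\sigma$-apart, with $\sigma\gg\delta$) to exclude an apex sitting within $5\delta$ of $W'$ without already lying in one of the inserted $\sigma/10$-balls; here it is convenient to carry along the invariant that $W$ itself already contains $B(v,\sigma/10)$ for each $v\in V$ (this is preserved by the construction, and compatible with \autoref{res: V in the 4-delta neighbourhood of W}). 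This dichotomy has three payoffs. First, $V'\setminus V$ is exactly the union of the $L'$-orbits of the apices of $\mathcal R_1$; as each such rotation group lies in $L'$ and $L'$ is conjugation-closed, $K_{V'}\leq L'$, so $\langle N,K_{V'}\rangle = L'$ and $V'$ is $L'$-invariant, giving Axiom~\ref{enu: extended windmill - invariant}. Second, it yields Axiom~\ref{enu: extended windmill - large angle}: if $v\notin V'$ then $d(v,W') > 5\delta = 2\delta+3\delta$, so \autoref{res: rotation family - small product and far qconvex} applied with $\alpha=2\delta$ gives $\gro x{hx'}v\leq 3\delta\leq 100\delta$ for all $x,x'\in W'$ and all $h\in H\setminus\{1\}$. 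Third, it feeds the ping-pong.

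\textit{Ping-pong and Axiom~\ref{enu: extended windmill - stabilizers}.} I expect this to be the main obstacle. To prove $L' = L * \bigl(*_{(H,v)\in\mathcal R_0}H\bigr)$ I would use the table-tennis lemma for free products, played on $v(\mathcal R)$. For $(H_t,v_t)\in\mathcal R_0$ put $\mathcal D_t = L'\cdot B(v_t,\sigma/10)$, and let $\mathcal D_L$ be an $L'$-invariant ``bulk'' region disjoint from the $\mathcal D_t$ (built from a neighbourhood of $W$, using \autoref{res: V in the 4-delta neighbourhood of W} to see that $L$ maps it into itself). A non-trivial element $h$ of a conjugate of $H_t$ sends everything outside $\mathcal D_t$ into $\mathcal D_t$: by Axiom~\ref{enu: rotation family - large angle} it moves $x\in B(v_t,\sigma/10)$ to distance $2\,d(v_t,x)$, and by \autoref{res: rotation family - small product at the apex} together with \autoref{res: rotation family - small product and far qconvex} the image of a point far from $v_t$ is pulled into an $O(\delta)$-neighbourhood of $v_t$, hence into $B(v_t,\sigma/10)$ because $\sigma/10\gg\delta$. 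A non-trivial element of $L$ preserves $\mathcal D_L$ and, by Axiom~\ref{enu: extended windmill - large angle} for the original windmill together with Axiom~\ref{enu: rotation family - apices apart}, pushes nothing into the $\mathcal D_t$. Table-tennis then shows any non-trivial reduced word $g_0h_1g_1\cdots h_kg_k$ (with $g_i\in L$ and $h_i$ in consecutive distinct factors) moves a base point of $\mathcal D_L$, which gives the free product statement, and in particular the embedding of $L$. Finally Axiom~\ref{enu: extended windmill - stabilizers}: for $v\notin V'$, an element of $\stab v\cap L'$ written in normal form either lies in a single $H_t$-conjugate and then fixes two apices at distance $\geq\sigma$ — impossible, since such an isometry must be trivial — or lies in $L$ and fixes $v\notin V$, contradicting Axiom~\ref{enu: extended windmill - stabilizers} for $(W,N,V)$; hence $\stab v\cap L' = \{1\}$. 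The recurring delicate point throughout is the calibration of constants: every ``$O(\delta)$-neighbourhood of $v_t$'' produced above must genuinely fit inside $B(v_t,\sigma/10)$, and the hull/quasi-convexity error terms must never exceed the $\sigma/10$ budget — which is exactly what the standing hypothesis $\sigma\geq\sigma_0$ (with $\sigma_0$ fixed via \autoref{res: stability discrete quasi-geodesic}) guarantees.
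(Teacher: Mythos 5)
Your construction of $W'$ is in the same spirit as the paper's: enlarge $W$ to capture nearby apices, take a hull, sweep it under the new group, and pad by a neighbourhood to restore $2\delta$-quasi-convexity; the dichotomy ``an apex is either in $W'$ or definitely far from it'' is the analogue of the paper's Lemma~\ref{res: windmill - apices in the neighborhood of S}, and your deductions of Axioms~\ref{enu: extended windmill - quasi-convex}, \ref{enu: extended windmill - invariant}, \ref{enu: extended windmill - large angle} from it are plausible (modulo constant bookkeeping). The serious problem is the argument for the free product and for Axiom~\ref{enu: extended windmill - stabilizers}.

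Your ping-pong is set up with domains that are declared $L'$-invariant: $\mathcal D_t = L'\cdot B(v_t,\sigma/10)$ and an ``$L'$-invariant bulk region'' $\mathcal D_L$. But if $\mathcal D_t$ is $L'$-invariant, then every element of $L'$ --- in particular every nontrivial $h$ in (a conjugate of) $H_{t'}$ for $t'\neq t$, and every nontrivial $u\in L$ --- maps $\mathcal D_t$ onto $\mathcal D_t$. The claim ``a non-trivial $h$ in a conjugate of $H_t$ sends everything outside $\mathcal D_t$ into $\mathcal D_t$'' therefore fails immediately: such an $h$ maps $\mathcal D_{t'}$ ($t'\neq t$) into $\mathcal D_{t'}$, not into $\mathcal D_t$. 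More generally, ping-pong is meaningless with domains invariant under the whole group: every word preserves every domain, so no word can move a base point in a way that distinguishes it from the identity. Because your verification of Axiom~\ref{enu: extended windmill - stabilizers} also reduces to this normal-form/ping-pong step (you only treat the cases ``lies in $L$'' or ``lies in a single $H_t$-conjugate'', and the genuine mixed words are what the ping-pong was supposed to handle), that part inherits the gap.

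What the paper does instead is a discrete quasi-geodesic tracking argument, which is the correct non-invariant substitute for ping-pong here. An element of the putative free product is written as $g = h_1\cdots h_m u$ with $u\in L$, $h_i\in H_i\setminus\{1\}$, and consecutive apices $v_i\neq v_{i+1}$ in $A$; one then constructs (\autoref{res: windmill - pre quasi-convex}) a sequence $y_0=y,\dots,y_{m+1}=gy'$ passing through the $v_i$'s, checks $\gro{y_{i-1}}{y_{i+1}}{y_i}\le 105\delta$ via \autoref{res: windmill - gromov product at the apex} and $\dist{y_i}{y_{i+1}}\geq\sigma$, and applies the stability of discrete quasi-geodesics (\autoref{res: stability discrete quasi-geodesic}). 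The pretranslation lemma (\autoref{res: windmill - pretranslation}) then shows any such $g$ with $m\geq 1$ (and not a single rotation) moves every point of the sail $S$ by at least $\sigma-440\delta>0$, which gives injectivity of $\mathbf L'\to L'$ and, with the quasi-convexity of $K_A\cdot S$, Axiom~\ref{enu: extended windmill - stabilizers}. If you want to salvage your proposal, replace the invariant-domain ping-pong by this quasi-geodesic tracking (or, equivalently, run Klein ping-pong with data attached to the particular normal form of $g$, not with $L'$-invariant sets).
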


\begin{proof}
	Let us denote by $A$ the following set of apices
	\begin{displaymath}
		A = \set{\fantomB v \in v(\mathcal R)\setminus V}{d(v,W) \leq3\sigma/10}.
	\end{displaymath}
	We consider two cases, depending on whether $A$ is empty or not.
	
	\paragraph{Case 1.} \emph{Assume that $A$ empty}.
	We choose for $W'$ the $\sigma/10$-neighborhood of $W$.
	Clearly (a) holds.
	By \autoref{res: V in the 4-delta neighbourhood of W} and since $\sigma$ is much greater than $\delta$, we get that $V'=W'\cap v(\mathcal R)=V$. 
 	To see that $(W', N, V)$ is an extended windmill, observe that \ref{enu: extended windmill - quasi-convex} follows by \autoref{res: neighborhood of a quasi-convex};
	\ref{enu: extended windmill - invariant} and \ref{enu: extended windmill - stabilizers} follows trivially since $V=V'$ then $L' = \langle N, K_{V'} \rangle = L$ and $(W,N,V)$ is an extended windmill. 
	It remains \ref{enu: extended windmill - large angle}, which follows from \autoref{res: rotation family - small product and far qconvex} and bearing in mind that $A=\emptyset$.
	Note also that (c) holds because $L = L'$.

	\paragraph{Case 2.} 
	\emph{Assume that $A$ is not empty}.
	We denote by $S$ (like \emph{sail}) the hull of $W \cup A$ (see \autoref{def: hull}).
	We are going to let this sail ``turn'' around the apices of $A$.
	Let $W'$ be the $\sigma/10$-neighborhood of $K_A \cdot S$ and $V' = W' \cap v(\mathcal R)$.
	In particular, $W'$ contains the $\sigma/10$-neighborhood of $W$, and hence (a) holds.
	The goal is to prove that (b) and (c) hold.
	The following observation will be useful: since $V$ and $W$ are both $L$-invariant, hence so is $A$ (and thus $S$) and thus
	\begin{equation}
	\label{eq: L normalizes K_A}
		W'\text{ is }\langle L,K_A\rangle\text{-invariant}, L \text{ normalizes } K_A \text{ and hence }\langle L, K_A \rangle \cdot S= K_A \cdot S.
	\end{equation}

	
	\begin{lemm}
	\label{res: windmill - gromov product at the apex}
		Let $(H,v) \in \mathcal R$ such that $v \in A$. 
		Let $x,y \in S$ and $h \in H\setminus\{1\}$.
		Then $\gro x{hy}v \leq 105\delta$.
	\end{lemm}
	
	\begin{proof}
		Recall that $S$ is the hull of $W \cup A$. 
		According to \autoref{res: gromov product and hull}, it is sufficient to prove that for all $x,y \in W\cup A$, $\gro x{hy}v \leq 102\delta$.
		Let $x,y \in W\cup A$.
		Note that if $x=v$ or $y=v$, then $\gro x{hy}v = 0$ ($h$ fixes $v$).
		Therefore we can suppose that $x$ and $y$ are distinct from $v$, and we have 3 different cases.
		\paragraph{Case 1.} \emph{Assume that $x$ and $y$ lie in $W$}.
		Recall that $v$ does not belong to $V$, thus by Axiom \ref{enu: extended windmill - large angle}, $\gro x{hy}v \leq 100\delta$.
		
		\paragraph{Case 2.} \emph{Assume that $x$ lies in $W$ and $y$ in $A-\{v\}$}.
		We denote by $r$ and $q$  $\delta$-projections of $v$ and $y$ on $W$ respectively.
		We claim that $\gro yqv> 101\delta$.
		By \autoref{res: proj quasi-convex}~(ii), 
		\begin{displaymath}
			\dist qr \leq \max \left\{ \dist yv - \dist yq - \dist vr + 14\delta, 7\delta \right\}.
		\end{displaymath}
		However $y$ and $v$ are two distinct apices. 
		It follows that $\dist yv \geq \sigma$ whereas  $\dist yq$ and $\dist vr$ are at most $3\sigma/10 + \delta$.
		The triangle inequality combined with our choice of $\sigma_0$ yields $\dist qr > 7\delta$.
		Consequently we necessarily have 
		\begin{displaymath}
			\dist yq + \dist qv \leq \dist yq + \dist qr + \dist rv \leq \dist yv + 14\delta.
		\end{displaymath}
		In particular, $\gro yvq \leq 7 \delta$.
		Hence $\gro yqv = \dist qv - \gro yvq \geq \dist qv - 7 \delta$.
		By the triangle inequality,  $\dist qv \geq \dist yv - \dist yq$.		
		Since $\dist yv  \geq \sigma$ and $\dist yq \leq 3\sigma/10 + \delta$,
		our claim follows from $\sigma \geq \sigma_0$.
		Recall that $x,q \in W$.
		It follows from Axiom~\ref{enu: extended windmill - large angle} that $\gro x{hq}v \leq 100\delta$.
		The four point inequality leads to
		\begin{equation}
		\label{eqn: windmill - gromov product at the apex}
			\min\left\{  \gro x{hy}v,  \gro {hy}{hq}v\right\} 
			\leq \gro x{hq}v + \delta
			\leq 101\delta.
		\end{equation}
		Note that $\gro yqv = \gro {hy}{hq}v$.
		Equation~\eqref{eqn: windmill - gromov product at the apex} combined with our claim gives that $\gro x{hy}v \leq 101\delta$.
		
		\paragraph{Case 3.} \emph{Assume that $x$ and $y$ lie in $A-\{v\}$}.
		Again, denote by $q$ a $\delta$-projection of $y$ on $W$. Since the lemma holds in Case 2, we get that $\gro x{hq}v \leq 101\delta$.
		Using again the four point inequality, \eqref{eqn: windmill - gromov product at the apex}, and the previous argument about $\gro{hy}{hq}v$,
		we get that $\gro x{hy}v \leq 102 \delta$.
	\end{proof}
	
	\begin{lemm}
	\label{res: windmill - apices in the neighborhood of S}
		Let $v \in v(\mathcal R)$.
		If $d(v,S) \leq \sigma/5$ then $v \in V \cup A$.
	\end{lemm}

	\begin{proof}
		Let $p$ be a $\delta$-projection of $v$ on $S$. By the definition of hull, there exists $y,y'\in W\cup A$ such that
		$p$ lies on a $(1,\delta)$-quasi-geodesic $\gamma$ with endpoint $y,y'$.
		In particular,  the triangle inequality yields to $ \gro y{y'}v \leq \gro y{y'}p + |p-v|$, and hence
		$\gro y{y'}v \leq \sigma/5 + 2\delta$.
		Let us denote by $z$ and $z'$ respective $\delta$-projections of $y$ and $y'$ on $W$.
		Applying twice the four point inequality (\ref{eqn: hyperbolicity condition 1}) we get
		\begin{equation}
		\label{eqn: windmill - apices in the neighborhood of K.S}
				\min \left\{ \gro yzv, \gro z{z'}v, \gro{y'}{z'}v \right\} \leq \gro y{y'}v +2 \delta \leq \sigma/5 + 4\delta.
		\end{equation}
		Assume first that the minimum in~(\ref{eqn: windmill - apices in the neighborhood of K.S}) is achieved by $\gro z{z'}v$.
		The windmill $W$ being $2\delta$-quasi-convex, we have 
		\begin{displaymath}
			d(v,W) \leq \gro z{z'}v + 2 \delta \leq  \sigma/5 + 6\delta \leq  3\sigma/10.
		\end{displaymath}
		By definition of $A$, $v$ is necessarily a point of $V \cup A$.		
		
		\medskip
		Assume now that the minimum is achieved by $\gro yzv$ (the proof works similarly for $\gro{y'}{z'}v$).
		It follows from the triangle inequality that $\dist yv - \dist yz \leq \gro yzv \leq \sigma/5 + 4\delta$.
		If $y\in W$, $\dist yz \leq \delta$ and $\dist yv \leq \sigma/5+5\delta$ and by definition of $A$, $v\in V\cup A$.
		If $y\in A$, by construction $\dist yz$ is bounded above by $3\sigma/10 + \delta$, thus $\dist yv < \sigma$.
		Since the distance between two distinct apices of $\mathcal R$ is at least $\sigma$, we get that  $y=v$.
		Hence $v \in A$.
	\end{proof}
	
	\begin{lemm}
	\label{res: windmill - L' invariant}
		The sets $L \cup  K_A$ and $ N\cup  K_{V'}$ generate the same subgroup $L'$ of $G$.
		Moreover $W'$ and $V'$ are $L'$-invariant.
	\end{lemm}
	
	\rem This lemma proves Axiom~\ref{enu: extended windmill - invariant} for our new windmill.
	\begin{proof}
	By construction $A\subseteq V'$.
	Since $L=\langle N, K_V \rangle$, we have that $\langle L \cup K_A \rangle \subset \langle N, K_{V'}\rangle$.
	It is enough to show that $K_{V'}\subset \langle L\cup K_A\rangle$.
	It follows from \autoref{res: windmill - apices in the neighborhood of S} that every apex contained in the $\sigma/10$-neighborhood of $K_A\cdot S$ (i.e. $W'$) actually belongs to $K_A\cdot S$. 
	Thus $V'$ is the set  $K_A \cdot (V\cup A)$. 
	Recall that $\mathcal R$ is $G$-invariant.
	The other inclusion follows.
	Moreover by \eqref{eq: L normalizes K_A}, $W'$ and $V'$ are both $L'$-invariant.
	\end{proof}

	For the remainder of the section, $L'$ denotes the subgroup $\langle L, K_A\rangle= \langle N, K_{V'}\rangle$.

	
	\paragraph{Decomposition of the elements of $L'$.}
	We denote by $\mathbf A$ a set of representatives for $A/L$.
	We use $\mathbf L$ to denote an abstract copy of $L$, and similarly for $(H,v)\in \mathcal R$, we use $(\mathbf H,\mathbf v)$ to denote an
	abstract copy of the pair. 
	We denote by $\mathbf L'$ the free product of $\mathbf L$ and the rotation groups $\mathbf H$ where $(\mathbf H,\mathbf v) \in \mathcal R$ and $\mathbf v \in \mathbf A$.
	\begin{equation*}
		\mathbf L' = \mathbf{L}* \left( *_{(\mathbf H,\mathbf v) \in \mathcal R, \mathbf v \in \mathbf A} \mathbf H\right).
	\end{equation*}
	It comes with a natural morphism $\mathbf L' \rightarrow L'$.
	By construction this map in onto.
	We are going to prove (among other things) that it is an isomorphism.
	Let $\mathbf g$ be an element of $\mathbf L'$.
	It can be written $g = \mathbf u_0 \mathbf h_1 \mathbf u_1 \dots \mathbf u_{m-1}\mathbf h_m \mathbf u_m$, where
	\begin{enumerate}
		\item for every $i \in \intvald 1m$ there exists $(\mathbf H_i,\mathbf v_i) \in \mathcal R$ with $\mathbf v_i \in \mathbf A$ such that $\mathbf h_i \in \mathbf H_i\setminus \{1\}$,
		\item for every $i \in \intvald 0m$, $\mathbf u_i \in \mathbf{L}$,
		\item for every $i \in \intvald 1{m-1}$, if $\mathbf u_i=1$ then $\mathbf v_i \neq \mathbf v_{i+1}$.
	\end{enumerate}
	The integer $m$ does not depend on the decomposition above. 
	We call it the \emph{number of rotation of $\mathbf g$} and denote it by $m(\mathbf g)$.
	The image $g$ of $\mathbf g$ in $L'$ can be rewritten as follows
	\begin{eqnarray*}
		g 
		& = & \left[\mathbf u_0\mathbf h_1\mathbf u_0^{-1}\right]\left[(\mathbf u_0\mathbf u_1)\mathbf h_2(\mathbf u_0\mathbf u_1)^{-1}\right] \dots \left[(\mathbf u_0\dots \mathbf u_{m-1})\mathbf h_m(\mathbf u_0\dots \mathbf u_{m-1})^{-1}\right] \mathbf u_0\dots \mathbf u_m \\
		& = & h_1\dots h_m u, 
	\end{eqnarray*}
	where $u = \mathbf u_0 \dots \mathbf u_m$ is in $L$ and for every $i \in \intvald 1m$, $h_i = (\mathbf u_0\dots \mathbf u_{i-1})\mathbf h_i(\mathbf u_0\dots \mathbf u_{i-1})^{-1}$ is an element of the rotation group $H_i$ fixing the vertex $v_i = \mathbf u_0\dots \mathbf u_{i-1}\mathbf v_i$.
	Since $A$ is $L$-invariant, all the apices $v_i$ belongs to $A$.
	We claim that for every $i \in \intvald i{m-1}$, $v_i \neq v_{i+1}$.
	Let $i \in \intvald 1{m-1}$.
	Assume that on the contrary our claim is false.
	Then $\mathbf v_i = \mathbf u_i\mathbf v_{i+1}$.
	The points $\mathbf v_i$ and $\mathbf v_{i+1}$ both belongs to $\mathbf A$, the set of representatives of $A/L$.
	It follows that $\mathbf v_i = \mathbf v_{i+1}$ is fixed by $\mathbf u_i$.
	According to Axiom~\ref{enu: extended windmill - stabilizers}, $\mathbf u_i$ is necessarily trivial.
	It contradicts property (iii) of the decomposition of $\mathbf g$ in $\mathbf L'$.
	
	\medskip
	The second way of writing the elements of $L'$, namely $g = h_1h_2\dots h_mu$, is shorter and will be preferred 
	and used in the next lemma.
	Note that the integer $m$ that appears in the second form is still the number of rotations of $\mathbf g$.
	
	\begin{lemm}
	\label{res: windmill - pre quasi-convex}
		Let $y, y' \in S$.
		Let $\mathbf g \in \mathbf L'$, $g$ its image in $L'$ and $m$ its number of rotations.
		There exists a sequence of points $y=y_0, \dots, y_{m+1}=gy'$ of $X$ satisfying the following properties
		\begin{enumerate}
			\item \label{enu: windmill - pre quasi-convex - points in S}
			for all $i \in \intvald 1{m+1}$ there exists $g_i \in L'$ such that $g_i^{-1}y_{i-1}$ and $g_i^{-1} y_i$ belong to $S$,
			\item \label{enu: windmill - pre quasi-convex - distance}
			for all $i \in \intvald 1{m-1}$, $\dist {y_{i+1}}{y_i} \geq \sigma$,
			\item \label{enu: windmill - pre quasi-convex - gromov produt}
			for all $i, j, k \in \intvald 1m$ with $i \leq j \leq k$ we have  $\gro {y_i}{y_k}{y_j} \leq 110\delta$,
			\item \label{enu: windmill - pre quasi-convex - quasi-geodesic}
			For all $x \in X$ there exists $i \in \intvald 0m$ such that $\gro{y_{i+1}}{y_i}x \leq \gro y{gy'}x + 218\delta$.
		\end{enumerate}
	\end{lemm}
	
	\begin{proof}
		According to our previous discussion $g$ can be written $h_1\dots h_m u$ where $u \in L$ and  for every $i \in \intvald 1m$ there exists $(H_i,v_i) \in \mathcal R$ with $v_i \in A$ such that $h_i \in H_i \setminus \{1\}$.
		Moreover two consecutive apices $v_i$ and $v_{i+1}$ are distinct.
		If $m=0$, i.e. $g$ belongs to $L$, then the points $y_0 = y$ and $y_1 = gy'$ lie in $S$ and hence satisfy the conclusion of the lemma.
		Assume now that $m \geq 1$.
		For all $i \in \intvald 1m$, we put $g_i = h_1\dots h_{i-1}$ and $y_i = g_iv_i$.
		Moreover, we put $g_{m+1}=h_1\dots h_m = gu^{-1}$, $y_0 = y$ and $y_{m+1} = gy'$.
		
		\begin{itemize}
		\item Let $i \in \intvald 1{m+1}$. 
		If $i\neq m+1$ then $g_i^{-1}y_i = v_i$. 
		Otherwise $g_{m+1}^{-1}y_{m+1} = uy'$.
		However $u$ belongs to $L$ which stabilizes $S$.
		Thus $g_i^{-1}y_i$ belongs to $S$.
		Assume now that $i \neq 1$.
		Recall that $h_{i-1}$ fixes $v_{i-1}$ thus $g_i^{-1}y_{i-1} = h_{i-1}^{-1}v_{i-1} = v_{i-1}$.
		On the other hand $g_1^{-1}y_0 = y$.
		By construction $g_i^{-1}y_{i-1}$ is a point of $S$. 
		This completes the proof of Point~\ref{enu: windmill - pre quasi-convex - points in S}.
			\item 
			Let $i \in \intvald 1{m-1}$.
			The apex $v_i$ is fixed by $h_i$ therefore
			\begin{displaymath}
				\dist {y_{i+1}}{y_i} = \dist{g_ih_iv_{i+1}}{g_iv_i} = \dist {v_{i+1}}{v_i}.
			\end{displaymath}
			However $v_{i+1}$ and $v_i$ are two distinct apices of $\mathcal R$, therefore $\dist{y_{i+1}}{y_i}\geq \sigma$.
			This proves Point~\ref{enu: windmill - pre quasi-convex - distance}.
			
			\item 
			Let $i \in \intvald 1m$. By construction $g_i^{-1} y_i=v_i$ whereas $g_i^{-1}y_{i-1}$ belongs to $S$.
			On the other hand $g_i^{-1}y_{i+1} = h_ig_{i+1}^{-1}y_{i+1}$.
			By \autoref{res: windmill - gromov product at the apex}, $\gro {y_{i-1}}{y_{i+1}}{y_i} = \gro{g_i^{-1}y_{i-1}}{g_i^{-1}y_{i+1} }{v_i}$ is bounded above by $105\delta$.
		\end{itemize}		
		
		We chose the constant $\sigma$ big enough compared to $\delta$ in such a way that we can apply \autoref{res: stability discrete quasi-geodesic} to the sequence $y_0,\dots,y_{m+1}$. 
		Point~\ref{enu: windmill - pre quasi-convex - gromov produt} and Point~\ref{enu: windmill - pre quasi-convex - quasi-geodesic} follow from the stability of discrete quasi-geodesics.
	\end{proof}
	
	\begin{lemm}
	\label{res: windmill - W' quasi-convex}
		The set $K_A\cdot S$ is $224\delta$-quasi-convex whereas $W'$ is $2\delta$-quasi-convex.
	\end{lemm}
	
	\rem This lemma proves Axiom~\ref{enu: extended windmill - quasi-convex} for our new windmill.
	Let $v$ be an apex of $v(\mathcal R)$ which is not in $V'$.
	According to \autoref{res: windmill - apices in the neighborhood of S} we have $d(v, W') \geq \sigma/10$.
	Since $W'$ is quasi-convex, it follows from \autoref{res: rotation family - small product and far qconvex} that our new windmill satisfies Axiom~\ref{enu: extended windmill - large angle}. 
	
	\begin{proof}
		The set $W'$ was defined as the $\sigma/10$-neighborhood of $K_A\cdot S$.
		According to \autoref{res: neighborhood of a quasi-convex}, it is sufficient to show that $K_A\cdot S$ is $224\delta$-quasi-convex.
		Let $x \in X$ and $y,y' \in K_A\cdot S$.
		It follows from \autoref{res: windmill - pre quasi-convex} that there exist $z,z' \in S$ and $g \in L'$ such that $\gro {gz}{gz'}x \leq \gro y{y'}x + 218\delta$.
		However $S$ being a hull, it is $6\delta$-quasi-convex (\autoref{res: hull quasi-convex}).
		In particular so is $gS$. 
		By \eqref{eq: L normalizes K_A}, we have that $gS \subseteq K_A \cdot S$.
		Therefore 
		\begin{displaymath}
			d(x,K_A\cdot S) \leq d(x,gS) \leq \gro {gz}{gz'}x +6\delta\leq \gro y{y'}x + 224\delta. \qedhere
		\end{displaymath}
	\end{proof}

	\begin{lemm}
	\label{res: windmill - pretranslation}
		Let $\mathbf g$ be an element of $\mathbf L'$ and $g$ its image in $L'$.
		One of the following holds
		\begin{enumerate}
			\item $\mathbf g$  belongs to $\mathbf L$ (and thus $g\in L$).
			\item There exists $(H,v) \in \mathcal R$, with $v \in A$ such that $g \in H \setminus \{1\}$.
			\item For every $y \in S$, $\dist {gy}y \geq \sigma -440\delta$. 
		\end{enumerate}
	\end{lemm}

	\begin{proof}
		Let $m$ be the rotation number of $\mathbf g$.
		Suppose that $m=0$.
		Then $\mathbf g$ belongs to $\mathbf{L}$, which gives the first case.
		Suppose that $m=1$.
		There exists $(H,v) \in \mathcal R$ with $v \in A$, $h \in H\setminus\{1\}$ and $u \in L$ such that $g = hu$.
		If $u=1$, then $g$ belongs to $H\setminus\{1\}$, which gives the second case.
		Therefore we can assume that $u \neq 1$.
		Since $u$ belongs to $L$, Axiom~\ref{enu: extended windmill - stabilizers} yields $uv \neq v$.
		Let $y \in S$.
		It follows from the triangle inequality that 
		\begin{equation*}
			\dist vy +\dist{uy}v = \dist{uv}{uy} + \dist{uy}v \geq \dist {uv}v \geq \sigma
		\end{equation*}
		On the other hand, both $y$ and $uy$ belong to $S$.
		By \autoref{res: windmill - gromov product at the apex}, we get $\gro{huy}yv \leq 105\delta$.
		Hence, 
		\begin{equation*}
			\dist{gy}y 
			= \dist {huy}y
			\geq \dist{huy}v + \dist vy - 210\delta 
			= \dist{uy}v + \dist vy - 210\delta 
			\geq \sigma - 210\delta,
		\end{equation*}
		which gives the third case. We have proved the lemma when $m\leq 1$.
		
		\medskip
		Suppose now that $m \geq 2$.
		Let $y \in S$.
		According to \autoref{res: windmill - pre quasi-convex}, there exists a sequence of points $y=y_0, \dots, y_{m+1} = gy$ with the following properties.
		\begin{itemize}
			\item $\gro {y_0}{y_{m+1}}{y_1} \leq 110\delta$ and $\gro {y_1}{y_{m+1}}{y_2} \leq 110\delta$.
			\item $ \dist{y_1}{y_2} \geq \sigma$.
		\end{itemize}
		In particular,
		\begin{equation*}
			\dist {gy}y  \geq \dist {y_0}{y_1} + \dist{y_1}{y_2} + \dist{y_2}{y_{m+1}} - 440\delta \geq \sigma - 440\delta. \qedhere
		\end{equation*}
	\end{proof}
	
	\begin{lemm}
	\label{res: windmill - stabilizers}
		For every $(H,v) \in \mathcal R$ with $v \notin V'$, we have $\stab v \cap L' = \{1\}$.
	\end{lemm}
	
	\rem The conclusion of the lemma corresponds to Axiom~\ref{enu: extended windmill - stabilizers} for our new windmill. 
	Recall that Axiom~\ref{enu: extended windmill - quasi-convex}, \ref{enu: extended windmill - invariant} and \ref{enu: extended windmill - large angle} have been already proved.
	It finishes the proof that the statement (b) of the proposition holds.
	
	\begin{proof}
		Let $(H,v) \in \mathcal R$ such that $v \notin V'$.
		Let $g \in L'$ such that $gv=v$.
		Let $y$ be a $\delta$-projection of $v$ on $K_A\cdot S$.
		There exists $w \in K_A$ such that $wy$ belongs to $S$.
		Recall that $V'$ is $K_A$-invariant. 
		Note that by \autoref{res: V in the 4-delta neighbourhood of W} and \autoref{res: windmill - apices in the neighborhood of S},
		we get that  $V$ is a subset of $V'$.
		Thus $wgw^{-1}$ fixes the apex $wv$ which does not belong to $V'$ and neither to $V$.
		We apply \autoref{res: windmill - pretranslation} to $wgw^{-1}$.
		We distinguish three cases.
		
		Assume first that $wgw^{-1} \in L$.
		Since $wgw^{-1}$ fixes an apex $wv\notin V$, Axiom~\ref{enu: extended windmill - stabilizers}, implies that $g$ is trivial.

		Assume now that there exists $(H',v') \in \mathcal R$ with $v' \in A$ such that $wgw^{-1} \in H' \setminus\{1\}$.
		A non trivial element of a rotation groups fixes exactly one points.
		However $wgw^{-1}$ fixes $v' \in V'$ and $wv \notin V'$.
		This case never happens.
		
		The last case states that $\dist {gy}y  = \dist{(wgw^{-1})wy}{wy}  \geq \sigma - 440\delta$.
		The points $y$ and $gy$ are respective $\delta$-projections of $v$ and $gv$ on $K_A\cdot S$, which is $224\delta$-quasi-convex.
		It follows from \autoref{res: proj quasi-convex} that 
		\begin{equation*}
			\dist {gy}y \leq \max\left\{\dist {gv}v - 2 \dist vy + 902\delta, 451\delta \right\}.
		\end{equation*}
		Since $\dist{gy}y \geq \sigma - 440\delta$ we get $\dist{gv}v \geq \sigma - 1342\delta$.
		Thus $g$ cannot fix $v$. 
		This case also never happens.
	\end{proof}

	\begin{lemm}
	\label{res: windmill - free product}
		The canonical map $\mathbf L' \rightarrow L'$ is one-to-one.
	\end{lemm}
	
	\begin{proof}
		Let $\mathbf g$ be an element of $\mathbf L'$ whose image $g$ in $L'$ is trivial.
		It follows from \autoref{res: windmill - pretranslation} that $\mathbf g$ belongs to $\mathbf{L}$.
		By construction the map $\mathbf L' \rightarrow L'$ induces an embedding of $\mathbf{L}$ into $L'$, hence $\mathbf g = 1$.
	\end{proof}
	
	\rem	\autoref{res: windmill - free product} shows that property (c) holds and concludes the proof of \autoref{res: growing windmill}.
\end{proof}

\begin{proof}[Proof of \autoref{res: advanced rotation family}]
	Let $\mathcal R$ be $\sigma$-rotation family, and let $Y$ and $N$ as in the hypothesis of the theorem.
	Note that $(Y,N,\emptyset)$ is an extended windmill, which we  denote by $(W_0, N, V_0)$.
	A proof by induction using \autoref{res: growing windmill} shows that for every $n \in \N$ there is an extended windmill $(W_n, N, V_n)$ with the following property.
	If $L_n$ stands for the subgroup generated by $N$ and $K_{V_n}$, then for every $n \in \N\setminus\{0\}$,
	\begin{enumerate}
		\item $W_n$ contains the $\sigma/10$-neighborhood of $W_{n-1}$;
		\item $V_n = W_n\cap v(\mathcal R)$;
		\item there exists a subset $\mathcal R_n$ of $\mathcal R$ such that $L_n$ is isomorphic to the free product of $L_{n-1}$ and the rotation groups of $H(\mathcal R_n)$.
	\end{enumerate}
	Note also that $L_0 = N$.
	Since the sequence of subsets $(W_n)$ is growing every vertex of $v(\mathcal R)$ ultimately belongs to some $V_n$.
	In other words $(L_n)$ is an increasing sequence of subgroups of $G$ whose union $L$ is exactly the subgroup generated by $N$ and $K$.
	Let $\mathcal S$ be the union of all $\mathcal R_n$.
	It follows from the free product structure of every $L_n$ that $L$ is isomorphic to the free product of $N$ and the rotation groups of $H(\mathcal S)$.
\end{proof}

\paragraph{Application.}
The goal of this paragraph is to prove the following statement.

\begin{theo}
\label{res: rotation family with a reduced element}
	Let $X$ be $\delta$-hyperbolic space and $G$ a group acting by isometries on it.
	There exists $\sigma_0 >0$ with the following property.
	Let $\mathcal R$ be a $\sigma$-rotation family with $\sigma >\sigma_0$.
	Let $K$ be the (normal) subgroup generated by all the rotation groups of $H(\mathcal R)$ and $\bar G$ be the quotient $G/K$
	Then the following holds.
	\begin{enumerate}
		\item The quotient $\bar X = X/K$ is $\bar \delta$-hyperbolic with $\bar \delta \leq 900 \delta$.
		\item For every $\bar g  \in \bar G$ acting loxodromically on $\bar X$, there exists a pre-image $g \in G$ of $\bar g$ and a subset $\mathcal S$ of $\mathcal R$ such that 		\begin{equation*}
			\langle g, K \rangle = \langle g \rangle * \left(*_{H \in H(\mathcal S)} H\right).
		\end{equation*}
	\end{enumerate}
\end{theo}

Let $\delta > 0$.
From now on $\sigma_0$ is the maximum of the constants respectively given by \autoref{res: standard rotation family} and \autoref{res: advanced rotation family}.
Up to increasing the value of $\sigma_0$ we can always assume that $\sigma_0 \geq L_S\delta + 150\delta$,
where $L_S$ is the constant of \autoref{def: constant LS}.
Let $X$ be a $\delta$-hyperbolic space endowed with an action by isometries of a group $G$.
Let $\mathcal R$ be $\sigma$-rotation family with $\sigma > \sigma_0$.
Recall that for $g\in G$ that is a loxodromic isometry of $G$,  $Y_g$ denotes the cylinder of $g$ (see Definition \ref{def: cylinder loxodromic element}).
\begin{defi}
\label{def: reduced element}
	Let $g$ be a loxodromic element of $G$. We say that $g$ is \emph{$\mathcal R$-reduced} if for every $(H,v) \in \mathcal R$, for every $h \in H\setminus\{1\}$, for every $y,y' \in Y_g$, $\gro {hy}{y'}v \leq 100\delta$.
\end{defi}

\begin{lemm}
\label{res: existence of reduced element}
	Let $g \in G$.
	There exists $u \in K$ such that $ug$ is either not loxodromic or $\mathcal R$-reduced.
\end{lemm}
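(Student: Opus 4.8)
The plan is to run a "shortening" argument on $g$ with respect to the cylinder $Y_g$, much as in the proof of the standard rotating families theorem, but adapted to a single loxodromic element. If $g$ is already not loxodromic, or already $\mathcal R$-reduced, there is nothing to do, so assume $g$ is loxodromic and not reduced: there is some $(H,v)\in\mathcal R$, some $h\in H\setminus\{1\}$ and $y,y'\in Y_g$ with $\gro{hy}{y'}v > 100\delta$. First I would record, using Axioms~\ref{enu: rotation family - large angle} and \ref{enu: rotation family - apices apart} together with \autoref{res: rotation family - small product at the apex}, that such a "bad" apex $v$ must be close to $Y_g$ (within, say, $100\delta+$ a bounded amount) — indeed by \autoref{res: rotation family - small product and far qconvex}, if $d(v,Y_g)$ were larger than the quasi-convexity constant of $Y_g$ plus $3\delta$ then we would get $\gro{hy}{y'}v\leq 3\delta$ for all $y,y'\in Y_g$, contradicting non-reducedness. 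So the obstruction to being reduced always comes from apices lying in a bounded neighbourhood of the cylinder.

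Next I would pick such a bad apex $v$ whose distance along (a $\delta$-nerve of) $g$ from a fixed basepoint is, say, minimal or controlled, and replace $g$ by $g' = hgh^{-1}$ or more precisely left-multiply by a suitable element $u_0\in K_{\{v\}}\subseteq K$: the point is that an element of the rotation group $H$ acts near $v$ like a rotation by a huge angle (Axiom~\ref{enu: rotation family - large angle}), so conjugating/premultiplying $g$ by $h\in H$ "folds" the part of the axis of $g$ passing near $v$ and strictly decreases the translation length $\len{g}$, or at least a suitable complexity. This is the heart of the windmill/rotating-families machinery. Concretely, I expect to iterate: produce $u_1,u_2,\dots\in K$ with $\len{u_k\cdots u_1 g}$ strictly decreasing until it drops below some threshold (around $L_S\delta + 150\delta$, matching \autoref{def: constant LS} and the standing assumption $\sigma_0\geq L_S\delta+150\delta$). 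Once $\len{u g}$ is below that threshold, $ug$ is either elliptic/parabolic — hence not loxodromic — or it is loxodromic with such a short translation length that by \autoref{res: moving point of a cylinder} and \autoref{res: rotation family - small product and far qconvex} no apex can sit close enough to $Y_{ug}$ to spoil reducedness, so $ug$ is automatically $\mathcal R$-reduced.

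To make the iteration terminate I would introduce a complexity that is the pair $(\len{ug}, \#\{\text{bad apices in a window of }Y_{ug}\})$ ordered lexicographically, or simply track $\len{ug}$ taking values in a discrete-ish set via \autoref{res: translation lengths} (so that strict decrease cannot happen infinitely often once it is bounded below by $0$), using that each rotation move decreases it by a definite amount bounded below in terms of $\sigma$ and $\delta$. The key geometric estimate at each step — showing that premultiplying by the appropriate rotation $h$ genuinely shortens $g$ — is where I expect the real work to be: one must compare $\len{hg}$ with $\len{g}$ by examining a $\delta$-nerve of $g$, locating the sub-arc that comes within $\approx 100\delta$ of the bad apex $v$, and using Axiom~\ref{enu: rotation family - large angle} (the "angle" at $v$ between $y$ and $hy$ is maximal, i.e. $\dist{hx}{x}=2\dist vx$) to see that the folded path is strictly shorter; the stability of local quasi-geodesics (\autoref{res: stability (1,l)-quasi-geodesic}) and the projection lemma \autoref{res: proj quasi-convex} are the tools for turning this into a clean inequality. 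The main obstacle is thus the quantitative bookkeeping: choosing which apex to fold at, ensuring the new element's cylinder is still controlled, and guaranteeing a uniform-in-$\delta,\sigma$ decrease so the process halts — all of which is standard in spirit (this is exactly the kind of argument behind \autoref{res: standard rotation family}) but needs care to state for one element rather than the whole normal closure.
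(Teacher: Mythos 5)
Your geometric idea is exactly the one the paper uses: if $g$ is loxodromic but not $\mathcal R$-reduced, a bad apex $v$ must lie close to the cylinder $Y_g$ (your argument via \autoref{res: rotation family - small product and far qconvex} for this is correct), and left-multiplying by the corresponding rotation $h^{-1}\in H$ folds the $\delta$-nerve at $v$ and strictly shortens the translation length by a uniform amount (about $56\delta$ in the paper). The one place you should be careful is the aside suggesting $g'=hgh^{-1}$: conjugation preserves translation length, so it cannot shorten anything; only the left-multiplication you mention as the ``more precise'' alternative works, and it stays within the allowed form $ug$ with $u\in K$ because $K$ is normal.

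Where you diverge from the paper is purely in the bookkeeping. You propose an iteration: fold, shorten by a definite amount, repeat until the translation length drops below a $\sigma$-dependent threshold, at which point no apex can be close enough to $Y_{ug}$ to spoil reducedness (your threshold $L_S\delta+150\delta$ is the wrong constant --- the correct one coming out of \autoref{res: moving point of a cylinder} and \ref{enu: rotation family - apices apart} is $\sigma-122\delta$, which is even larger since $\sigma\geq\sigma_0\geq L_S\delta+150\delta$ --- but your reasoning that small translation length forces reducedness is sound). This iteration does terminate, because the per-step decrease is uniform, so your worries about a complexity function are unnecessary. The paper instead collapses the whole induction into a single step: it picks $u_0\in K$ so that $\len{u_0g}$ is within $\delta$ of $\inf_{u\in K}\len{ug}$, and then the one-shot folding estimate $\len{h^{-1}u_0g}\leq\len{u_0g}-56\delta<\len{u_0g}-\delta$ immediately contradicts near-minimality. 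This is cleaner: it needs no termination argument, no discreteness considerations, and no threshold. Both routes are valid, and the hard part --- the quantitative estimate that folding at a close apex genuinely shortens, carried out via a $\delta$-nerve, a test point $z$ at distance $100\delta$ from $v$, its image $hz$, and the projection lemma \autoref{res: proj quasi-convex} --- is the same in either version, and you correctly identified it as the crux.
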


\begin{proof}
	We assume that for every $u \in K$, $ug$ is loxodromic.
	We now fix $u_0 \in K$ such that 
	\begin{equation}
	\label{eq: def u_0}	
		\text{for every $u \in K$, $\len {u_0g} \leq \len{ug} + \delta$.}
	\end{equation}	
	For simplicity we write $f = u_0g$.
	The goal is to prove that $f$ is $\mathcal R$-reduced.
	Assume on the contrary that it is not.
	There exists $(H,v) \in \mathcal R$, $h \in H\setminus\{1\}$ and $y,y' \in Y_{f}$ such that $\gro {hy}{y'}v > 100\delta$.
	We first claim that $v$ is $5\delta$-close from $Y_{f}$.
	According to \autoref{res: rotation family - small product at the apex}, $\gro{hy}yv \leq 2\delta$.
	Using the four point inequality we have 
	\begin{equation*}
		\min\left\{ \gro{hy}{y'}v , \gro {y'}yv \right\} \leq \gro {hy}yv + \delta \leq 3\delta.
	\end{equation*}
	By assumption the minimum cannot be achieved by $ \gro y{hy'}v$ thus $\gro{y'}yv \leq 3\delta$.
	However $Y_{f}$ is $2\delta$-quasi-convex, thus $v$ is $5\delta$-close from $Y_{f}$.
	In particular $\len {f} \geq \dist{fv}v - 122\delta$ (\autoref{res: moving point of a cylinder}).
	Since $f$ is loxodromic, it cannot fix $v$. 
	It follows from \ref{enu: rotation family - apices apart} that $\len {f} \geq \sigma - 122\delta \geq L_S\delta$.
	We fix $\gamma \colon \R \rightarrow X$ a $\delta$-nerve of $f$.
	Note that $Y_f$ is contained in the $27\delta$-neigborhood of $\gamma$ (see \autoref{def: nerve} and the discussion afterwards).
	\begin{figure}[htbp]
		\centering
		\includegraphics[page=1]{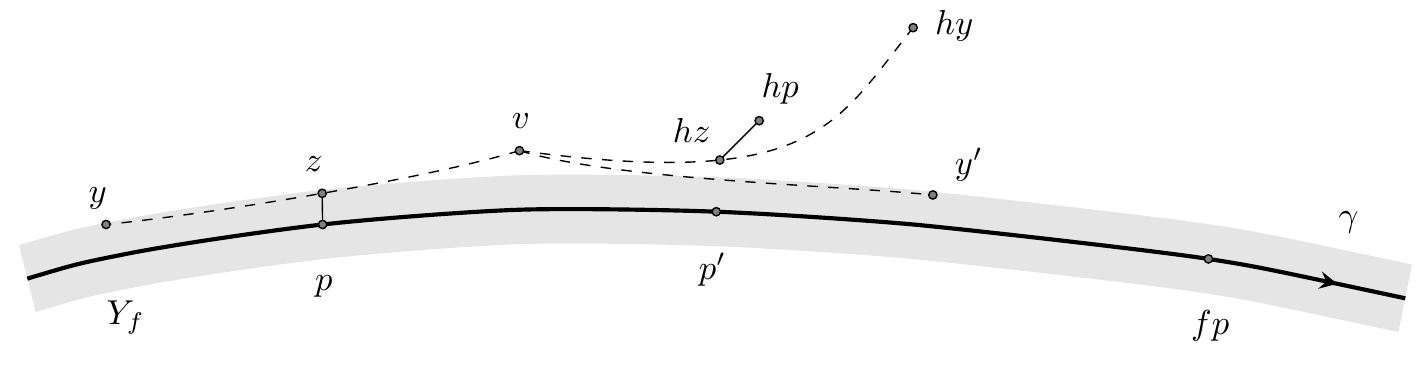}
		\caption{Shortening the translation length of $f$.}
		\label{fig: existence of reduced elements}
	\end{figure}
	Let $z$ be a point of $X$ such that $\gro yvz \leq \delta$ and $\dist vz = 100 \delta$ and $p$ a projection of $z$ on $\gamma$ (see \autoref{fig: existence of reduced elements}).
	Note that $\dist vz < \max \{ \sigma/10, \gro {hy}{y'}v\}$.
	The points $y$ and $v$ both belong to the $5\delta$-neighborhood of $Y_f$ which is $2\delta$-quasi-convex (\autoref{res: neighborhood of a quasi-convex}), hence $d(z,Y_f) \leq 8 \delta$.
	On the other hand, the cylinder $Y_{f}$ is contained in the $27\delta$-neighborhood of $\gamma$, thus $\dist zp \leq 35\delta$.
	
	We want to use the previous reasoning to bound $\dist{hz}{p'}$ where $p'$ is a $\delta$-projection of $hz$.	
	For that, we need to bound $\gro{y'}v{hz}$. This follows as a consequence of (\ref{res: metric inequalities - thin triangle}) applied to $v,z,hz$ and $y'$ and bearing in mind that $\dist{v}{z}< \gro{y'}{hy}v$. Indeed,  we get 
	\begin{eqnarray*}
		\gro {y'}v{hz} & \leq \max  \{ \dist{v}{hz} - \gro{y'}{hy}v, \gro{v}{hy}{hz}\}+ \delta\\
					   &	= \max  \{ \dist{v}{z}- \gro{y'}{hy}v, \gro{v}{y}{z} \} +\delta = 2\delta.
	\end{eqnarray*}
	Now, reasoning as previously we get $\dist{hz}{p'} \leq 36\delta$.
	Combined with \ref{enu: rotation family - large angle} it yields
	\begin{displaymath}
		\dist p{p'} \geq \dist{hz}z - 71 \delta \geq 2 \dist vz - 71\delta \geq 129 \delta.
	\end{displaymath}
	Up to changing $f$ by its inverse we can always assume that $fp$ and $p'$ are in the same connected component of $\gamma\setminus\{p\}$.
	Recall that $\gamma$ is an $\len {f}$-local $(1,\delta)$-quasi-geodesic.
	Thus $\dist{fp}{p'} \leq \dist{fp}p - \dist p{p'} + \delta$.
	However $p$ being a point of $\gamma$, $\dist{fp}p \leq \len{f} + \delta$.
	On the other hand $\dist{hp}{p'} \leq \dist{hp}{hz} + \dist{hz}{p'} \leq 71\delta$.
	Consequently 
	\begin{equation*}
		\len{h^{-1}u_0g} = \len{h^{-1}f} \leq \dist{fp}{hp} \leq \dist{fp}{p'} + \dist{hp}{p'} \leq \len {f} - 56\delta < \len{u_0g} - \delta.
	\end{equation*}
	Since $h^{-1}u_0\in K$, this last inequality contradicts  \eqref{eq: def u_0}.
	Hence $f = u_0g$ is $\mathcal R$-reduced.
\end{proof}

\begin{proof}[Proof of \autoref{res: rotation family with a reduced element}]
According to \autoref{res: standard rotation family} the quotient space $\bar X = X/K$ is $\bar \delta$-hyperbolic with $\bar \delta \leq 900 \delta$.
Let $\bar g$ be an element of $\bar G$, loxodromic for its action on $\bar X$.
In particular, $\len[stable]{\bar g} >0$.
By construction the projection $X \twoheadrightarrow \bar X$ is $1$-Lipschitz.
Thus every pre-image of $\bar g$ is loxodromic.
According to \autoref{res: existence of reduced element}, there exists a preimage $g$ of $\bar g$ which is $\mathcal R$-reduced.
Let $Y$ be the cylinder of $g$ and $N$ the cyclic group generated by $g$.
We need to check that the hypothesis of \autoref{res: advanced rotation family} hold.
First, by \autoref{res: cylinder strongly quasi-convex}, $Y$ is $2\delta$-quasi-convex.
By definition $Y$ is $N$-invariant.
Since $g$ is $\mathcal R$-reduced, condition (i) of  \autoref{res: advanced rotation family} hold.
Finally, as $g$ is loxodromic, $g$ cannot fix a point, thus for every $(H,v) \in \mathcal R$, we get $\stab v \cap N = \{1\}$.
\end{proof}

%
\section{Relatively hyperbolic groups and rotation families}
%
\label{sec : relative hyperbolicity}

There exists several definitions for the concept of relatively hyperbolic groups,
we present here to the one of Osin. We refer to \cite{Osin:2006cf} for details
and the equivalence with the concepts of relative hyperbolicity of Bowditch and strongly relative hyperbolicity in the sense of Farb.
Also see \cite{Hruska:2010iw} for a definition of relative hyperbolicity that mimics the one of geometrically finite hyperbolic groups.

\begin{defi}
\label{def: relatively hyperbolic}
	A group finitely generated group $G$ is \emph{ hyperbolic relative to a family of subgroups $\mathcal P=\{P_1,\dots, P_n\}$}, 
	if it admits a finite presentation relative to $\mathcal P$
	and this presentation has a linear relative Dehn function. 
	The subgroups $P_1,\dots, P_n$ are called  {\it peripheral (or parabolic) subgroups } of $G.$
\end{defi}

\rem We have chosen to state the present definition for sake of conciseness. 
It is worth noticing that  \autoref{def: relatively hyperbolic} in the case that $\mathcal P$ is empty,
recovers the definition of an hyperbolic group in terms of isoperimetric inequalities.

\medskip
The connection between relatively hyperbolicity and rotation families follows from
\cite[Proposition 7.7 and Corollary 7.8.]{Dahmani:2011vu}, a version of which, is stated below.

\begin{prop}
	\label{res: rotating family for relatively hyperbolic}
	Let $G$ be a finitely generated group hyperbolic relative to a finite family of subgroups $\{P_1,\dots, P_n\}$.
	There exists $\sigma_0>0$ and $\delta>0$ such that for every $\sigma\geq \sigma_0$
	there is a $\delta$-hyperbolic space $X$,
	and a finite subset $\Phi$ of $G\setminus\{1\}$ with the following properties.
	For every collection of normal subgroups  $N_i \trianglelefteq P_i$ with $N_i\cap \Phi=\emptyset$,
	$i=1,2,\dots, n$ there is a $\sigma$-rotating family $\mathcal{R}$ where for every $(H,v)\in \mathcal R$, $H$ is conjugate to some $N_i$.
	Moreover, if each $N_i$ is of finite index in  $P_i$, and $K$ is the normal subgroup of $G$ generated by $N_1\cup \dots \cup N_n$, 
	then $\bar G \coloneqq G/K$ acts properly discontinuously and cocompactly on $\bar X\coloneqq X/K$ which is $\bar\delta$-hyperbolic for some
	$\bar \delta\leq 900 \delta$. In particular, $\bar G$ is an hyperbolic group. 
\end{prop}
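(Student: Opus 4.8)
The plan is to recognise the statement as a repackaging of \cite[Proposition~7.7 and Corollary~7.8]{Dahmani:2011vu}, and to explain how its final assertion follows from \autoref{res: standard rotation family}. For the first part, one uses that a group hyperbolic relative to $\{P_1,\dots,P_n\}$ has this collection hyperbolically embedded, so that $G$ acts coboundedly by isometries on a $\delta$-hyperbolic length space $X$ --- concretely a hyperbolic cone-off of a relative Cayley graph of $G$ --- carrying a $G$-orbit of ``apices'', one apex $v_i$ attached to (the coset of) each $P_i$, with $v_i$ fixed by $P_i$. Given normal subgroups $N_i\trianglelefteq P_i$ one sets
\begin{displaymath}
	\mathcal R = \{ (gN_ig^{-1}, gv_i) \mid g\in G,\ i=1,\dots,n \}.
\end{displaymath}
Axiom~\ref{enu: rotation family - G invariant} is immediate. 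The rotation angle visible at an apex and the distance between two distinct apices are controlled, in \cite{Dahmani:2011vu}, by the word lengths in $G$ of the nontrivial elements of the $N_i$; hence it suffices to let $\Phi$ be a suitable finite subset of $G\setminus\{1\}$ --- the elements that are ``too short'' for the cone-off --- and to choose $\sigma_0$ accordingly, so that Axioms~\ref{enu: rotation family - large angle} and~\ref{enu: rotation family - apices apart} hold for every $\sigma\ge\sigma_0$ whenever $N_i\cap\Phi=\emptyset$. By construction every rotation group of $\mathcal R$ is a conjugate of some $N_i$.

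Now suppose in addition that each $N_i$ has finite index in $P_i$, and let $K$ be the normal subgroup of $G$ generated by $N_1\cup\dots\cup N_n$. Since every rotation group is a conjugate of some $N_i$, this $K$ is precisely the subgroup generated by all rotation groups, so \autoref{res: standard rotation family}\,(ii)--(iii) applies: $\bar X = X/K$ is a $\bar\delta$-hyperbolic length space with $\bar\delta\le 900\delta$, and $K$ acts properly on $X\setminus v(\mathcal R)$. Since $K\trianglelefteq G$, the $G$-action descends to an isometric action of $\bar G = G/K$ on $\bar X$, which is cobounded because the $G$-action on $X$ was. To see that this action is properly discontinuous and cocompact --- whence $\bar G$ is hyperbolic by the \v Svarc--Milnor lemma --- I would control the $\bar G$-stabiliser of a point $\bar x\in\bar X$ by lifting $\bar x$ to some $x\in X$: this stabiliser is a quotient of the $G$-stabiliser of $x$, which is finite when $x$ is not an apex (the $G$-action on $X$ being proper away from the apices), and which for the apex over $gP_i$ is $gP_ig^{-1}\big/\big(gP_ig^{-1}\cap K\big)$, finite because $gN_ig^{-1}\le gP_ig^{-1}\cap K$ has finite index in $gP_ig^{-1}$ by hypothesis. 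Coboundedness then yields cocompactness, after observing that $\bar X$ may be taken proper --- the infinite-valence apices of $X$ acquiring finite valence in the quotient precisely because of the finite-index hypothesis.

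The only genuine difficulty I foresee is expository: matching the concrete model of $X$, its apices and their stabilisers used in \cite{Dahmani:2011vu} with the axioms of \autoref{def: rotation family}, and making explicit the dependence of $\Phi$ and $\sigma_0$ on the relative presentation of $G$. There is no new mathematical content beyond \cite{Dahmani:2011vu}; the purpose of isolating this proposition is to have a clean input for \autoref{res: rotation family with a reduced element}, which will be applied in \autoref{sec : farrell jones}.
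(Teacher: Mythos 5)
Your proposal is correct and follows essentially the same route as the paper: both recognise the statement as a restatement of \cite[Proposition~7.7 and Corollary~7.8]{Dahmani:2011vu}, both produce $X$ as a cone-off (the paper phrases it via the horoball model of relative hyperbolicity and the cone construction of \cite[Lemma~7.2]{Dahmani:2011vu}, you phrase it as coning a relative Cayley graph, but these are the same construction), both take the apices to be the cone points with rotation groups the conjugates of the $N_i$, and both invoke \autoref{res: standard rotation family} for the $\bar\delta\le 900\delta$ estimate. The one place you supply more detail than the paper is the ``moreover'' part, where you spell out why stabilisers of points of $\bar X$ are finite and why coboundedness upgrades to cocompactness; the paper simply cites \cite[Corollary~7.8]{Dahmani:2011vu} for this, and your sketch is exactly the argument underlying that corollary. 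One small wording quibble: the paper is careful to point out that $\delta$ depends only on $\sigma_0$ and not on $\sigma$ (so that the same $\delta$-hyperbolicity constant works for every $\sigma\ge\sigma_0$); this is implicit in your description but worth making explicit since $\Phi$ and $\sigma_0$ are chosen before $\sigma$ in the statement.
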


\rem For sake of conciseness, we have preferred to state this  version of \cite[Proposition 7.7. and Corollary 7.8]{Dahmani:2011vu} because it is suitable for our applications.  
Roughly, to see how \autoref{res: rotating family for relatively hyperbolic}
follows from \cite[Proposition 7.7.]{Dahmani:2011vu}, one should start with the horoball
definition of relatively hyperbolic group used in  \cite[Definition 7.1.]{Dahmani:2011vu}
rather than \autoref{def: relatively hyperbolic}. From that, one uses the construction
of  \cite[Lemma 7.2.]{Dahmani:2011vu} which roughly replaces the each horoball $Y$ by a cone $Y \times [0, \sigma]/\sim$
(where $\sim$ identifies all the points of the form $(y,0)$), to produced a
$\delta$-hyperbolic space. It is important to note that $\delta$ depends on $\sigma_0$
but not on $\sigma$. The apices of these cones will be the apices of the rotation family
and are stabilized by conjugates of the parabolic subgroups.
The first part of the proposition is now implied by \cite[Proposition 7.7.]{Dahmani:2011vu}
It is worth noticing that the reason of the condition of  avoiding a finite set $\Phi$
is to guarantee that the rotation groups ``rotate with a large angle''. 
The moreover part follows from  \cite[Corollary 7.8]{Dahmani:2011vu} and the particular
estimate for $\bar \delta$ follows from \autoref{res: standard rotation family}.
An interested reader can check the details in   
\cite[\S 7.1.]{Dahmani:2011vu}.

\medskip
We now  obtain an extension  (Item (iii) of theorem below) of the Dehn fillings result for relatively hyperbolic groups, which was proved originally by Osin \cite{Osin:2007ia} and 
Groves and Manning \cite{Groves:2008ip}. 

\begin{theo}
	\label{res: Dehn filling with a reduced element}
	Let $G$ be finitely generated group hyperbolic relatively to a family of subgroups $\{P_1,\dots, P_n\}$.
	There is a finite set $\Phi \subseteq G\setminus\{1\}$ such that
	whenever we take finite index normal subgroups $N_i\trianglelefteq P_i$  with $N_i\cap \Phi=\emptyset$ for $i=1,\dots, n$ then the following hold:
	\begin{enumerate}
		\item \label{enu: Dehn filling with a reduced element - hyperbolic quotient}
		$\bar G \coloneqq G/K$ is an hyperbolic group where $K$ is the normal subgroup of $G$ generated by $N_1\cup \dots \cup N_n$.
		\item \label{enu: Dehn filling with a reduced element - free kernel}
		there exists subsets $T_i$ of $G$ for $i=1,\dots, n$ such that $K$ is isomorphic to
		$*_{i=1}^n\left(*_{t\in T_i} N_i^t\right)$.
		\item \label{enu: Dehn filling with a reduced element - free primages}
		for every $\bar g\in \bar G$ of infinite order, there is a pre-image $g$ of $\bar g$ under the natural map $G\to \bar G$ and  subsets $T_i'$ of $G$ for $i=1,\dots, n$ such that
		\begin{displaymath}
			\langle g, K \rangle = \langle g \rangle * \left[*_{i=1}^n \left(*_{t\in T_i'}N_i^t\right) \right].
		\end{displaymath}
	\end{enumerate}
\end{theo}

\begin{proof}
It follows combining \autoref{res: rotating family for relatively hyperbolic}, \autoref{res: advanced rotation family} and \autoref{res: rotation family with a reduced element}.
\end{proof}

%
\section{Farrell-Jones via Dehn fillings}
%

\label{sec : farrell jones}

\paragraph{Clousure properties of Farrell-Jones groups }

Let $\mathfrak{C}$ be the class of groups satisfying  the K- and L-theoretic Farrell-Jones Conjecture with finite wreath products (with coefficients in additive categories) with respect to the family of virtually cyclic subgroups. The statement  of the Farrell-Jones conjecture and its applications can be found in \cite{Bartels:2014jv,Bartels:2008bn}. 
We collect now some properties of the class $\mathfrak{C}$.

\begin{prop}[\cite{Bartels:2008bn, Bartels:2014ga,Bartels:2014jv,Bartels:2012ca}{\cite[Proposition~4.1]{Gandini:2013ds}}]\label{res: FJinh} The following properties hold:
	\begin{enumerate}
		\item \label{enu: FJinh - subgroups}
		$\mathfrak{C}$ is closed under taking subgroups.
		\item \label{enu: FJinh - free products}
		 $\mathfrak{C}$ is closed under free products.
		\item \label{enu: FJinh - hyperbolic abelian}
		 hyperbolic groups and abelian groups are in $\mathfrak{C}$.
		\item \label{enu: FJinh - short exact sequence}
		 If $\pi\colon G\to \bar G$ is a morphism such that $\bar G$ is in $\mathfrak{C}$
		and for every torsion-free cyclic subgroup $\bar H$ of $\bar G$, $\pi^{-1}(\bar H)$ is in $\mathfrak{C}$ then $G$ is in $\mathfrak{C}$.
	\end{enumerate}
\end{prop}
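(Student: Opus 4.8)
The statement is a compendium of facts established elsewhere, so the plan is not to prove anything ab initio but to point to the right places in the literature and explain how the pieces fit together; the input genuinely due to this paper is that \autoref{res: Dehn filling with a reduced element} guarantees the hypotheses of item~\ref{enu: FJinh - short exact sequence} in the relatively hyperbolic case. For item~\ref{enu: FJinh - subgroups} I would invoke the inheritance of the Full Farrell--Jones Conjecture under passage to subgroups: if $G\wr F$ satisfies the assembly isomorphism in every additive category for all finite $F$, then so does $U\wr F$ for each $U\le G$, because $U\wr F$ embeds into $G\wr F$ after enlarging $F$, and the Full Conjecture is closed under subgroups (see \cite{Bartels:2014ga}). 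Item~\ref{enu: FJinh - free products} is the stability of the Full Conjecture under free products --- a special case of its stability under fundamental groups of graphs of groups acting on their Bass--Serre trees with vertex and edge groups in $\mathfrak C$ --- see \cite{Bartels:2014jv} and \cite[Proposition~4.1]{Gandini:2013ds}. For item~\ref{enu: FJinh - hyperbolic abelian}: hyperbolic groups lie in $\mathfrak C$ by the solution of the conjecture for hyperbolic groups in $K$-theory \cite{Bartels:2008bn} and $L$-theory \cite{Bartels:2012ca}, proved in the strong form incorporating finite wreath products; and every abelian group is a directed union of its finitely generated subgroups, each of the form $\Z^n\times(\text{finite})$ and hence a $\mathrm{CAT}(0)$-group, so it lies in $\mathfrak C$ by \cite{Wegner:2012hh} together with the closure of $\mathfrak C$ under directed colimits \cite{Bartels:2014ga}; since $A\wr F$ is virtually abelian, hence again $\mathrm{CAT}(0)$, this also covers the wreath-product clause.

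The substantive item is~\ref{enu: FJinh - short exact sequence}, and the plan is to combine the transitivity principle for the Full Conjecture with the classification of virtually cyclic groups --- this is exactly \cite[Proposition~4.1]{Gandini:2013ds}, resting on \cite{Bartels:2014ga}. Since $\bar G\in\mathfrak C$, it satisfies the (wreath-product) Farrell--Jones Conjecture relative to the family of its virtually cyclic subgroups; by the transitivity principle it therefore suffices to prove that $\pi^{-1}(\bar V)\in\mathfrak C$ for every virtually cyclic subgroup $\bar V\le\bar G$, whereas the hypothesis only provides this for torsion-free cyclic $\bar V$. If $\bar V$ is infinite, it contains a finite-index subgroup $\bar C\cong\Z$, so $\pi^{-1}(\bar C)$ is a finite-index subgroup of $\pi^{-1}(\bar V)$ lying in $\mathfrak C$ by hypothesis. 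If $\bar V$ is finite, then $K\coloneqq\ker\pi=\pi^{-1}(\{1\})$ has finite index in $\pi^{-1}(\bar V)$ and lies in $\mathfrak C$, again by hypothesis applied to the trivial subgroup. In either case $\pi^{-1}(\bar V)$ has a finite-index subgroup $U\in\mathfrak C$, and one concludes because $\mathfrak C$ is closed under finite-index overgroups: $\pi^{-1}(\bar V)$ embeds into $U\wr\big(\pi^{-1}(\bar V)/U\big)$, which lies in $\mathfrak C$ since $U$ does, and then item~\ref{enu: FJinh - subgroups} applies.

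The delicate point --- and the reason $\mathfrak C$ is defined through the \emph{Full} conjecture with finite wreath products rather than the bare Farrell--Jones Conjecture --- is precisely this closure of $\mathfrak C$ under finite-index overgroups: it fails for the plain conjecture and is available here only because the wreath-product refinement feeds back into item~\ref{enu: FJinh - subgroups}. Once that is in place the argument is bookkeeping, all carried out in \cite[Proposition~4.1]{Gandini:2013ds} on the basis of \cite{Bartels:2014ga,Bartels:2008bn,Bartels:2012ca,Bartels:2014jv}; so in the paper the proof will amount to a reference, the substantive work being to check, through \autoref{res: Dehn filling with a reduced element}, that item~\ref{enu: FJinh - short exact sequence} can be applied to relatively hyperbolic groups with residually finite peripheral subgroups satisfying the Farrell--Jones Conjecture.
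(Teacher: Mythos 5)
Your proposal is correct and matches the paper's approach: the paper offers no argument of its own for this proposition, simply citing \cite{Bartels:2008bn, Bartels:2014ga,Bartels:2014jv,Bartels:2012ca} and \cite[Proposition~4.1]{Gandini:2013ds}, which is exactly the reading you arrive at. Your unpacking of how those references combine (closure under subgroups, free products, directed colimits, the Kaloujnine--Krasner finite-index trick, and the transitivity principle reducing item~\ref{enu: FJinh - short exact sequence} to preimages of torsion-free cyclic subgroups) is sound; the only cosmetic slip is writing $U\wr(\pi^{-1}(\bar V)/U)$ where in general one should embed into $U\wr S_n$ with $n=[\pi^{-1}(\bar V):U]$, though in the cases at hand $U$ can be taken normal so your version also works.
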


\begin{theo}\label{res : farrell jones for relatively hyperbolic}
	Let $G$ be a finitely generated group hyperbolic relative to residually finite groups in the class $\mathfrak C$.
	Then $G$ is in the class $\mathfrak C$.
\end{theo}

\begin{proof}
	Let $\mathcal P = \{P_1, \dots, P_n\}$ be the peripheral subgroups of $G$.
	Let $\Phi$ be the finite subset given by \autoref{res: Dehn filling with a reduced element}.
	Recall that peripheral subgroups are residually finite.
	Hence for every $i \in \intvald 1n$, there exists a finite index normal subgroup $N_i$ of $P_i$ such that $N_i \cap \Phi = \emptyset$.
	Note that every $N_i$ belongs to $\mathfrak C$ (\autoref{res: FJinh}~\ref{enu: FJinh - subgroups}).
	Let $K$ be the normal subgroup of $G$ generated by $N_1\cup \dots \cup N_n$.
	Applying \autoref{res: Dehn filling with a reduced element} we get the following.
	\begin{enumerate}
		\item $\bar G \coloneqq G/K$ is an hyperbolic group.
		In particular it belongs to $\mathfrak C$ (\autoref{res: FJinh}~\ref{enu: FJinh - hyperbolic abelian}).
		\item There exists subsets $T_i$ of $G$ for $i=1,\dots, n$ such that $K$ is isomorphic to
		$*_{i=1}^n\left(*_{t\in T_i} N_i^t\right)$.
		As we noticed before every $N_i$ is in $\mathfrak C$.
		According to \autoref{res: FJinh}~\ref{enu: FJinh - free products} so is $K$.
		\item For every $\bar g\in \bar G$ of infinite order, there is a pre-image $g$ of $\bar g$ under the natural map $\pi \colon G\to \bar G$ and  subsets $T_i'$ of $G$ for $i=1,\dots, n$ such that
		\begin{displaymath}
			\langle g, K \rangle = \langle g \rangle * \left[*_{i=1}^n \left(*_{t\in T_i'}N_i^t\right) \right].
		\end{displaymath}
		Recall that cyclic groups are in $\mathfrak C$.
		Applying again \autoref{res: FJinh}~\ref{enu: FJinh - free products}, we get that $\langle g, K \rangle$ is in $\mathfrak C$ as well.
	\end{enumerate}
	We apply \autoref{res: FJinh}~\ref{enu: FJinh - short exact sequence} with $\pi \colon G \rightarrow \bar G$.
	Let $\bar H$ be an torsion-free cyclic subgroup of $\bar G$.
	If $\bar H$ is trivial, then we noticed that $K = \pi^{-1}(\bar H)$ is in $\mathfrak C$.
	Otherwise, there exists a loxodromic element of $\bar G$ generating $\bar H$.
	Then we observed just above that $K = \pi^{-1}(\bar H)$ in a free product that lie again in $\mathfrak C$.
\end{proof}

\todos  
\makebiblio

\end{document}